\pgfplotsset{compat=newest}
\newtheorem{thm}{Theorem}[section]
\newtheorem*{thm*}{Theorem}
\newtheorem{lem}[thm]{Lemma}
\newtheorem{prop}[thm]{Proposition}
\theoremstyle{definition}
\newtheorem{asm}{Assumption}[section]
\newtheorem{rem}{Remark}[section]
\newtheorem*{rem*}{Remark}
\numberwithin{equation}{section}
\newcommand{\R}{\mathbb{R}}
\newcommand{\Rn}{{\mathbb{R}^n}}
\DeclareMathOperator{\dist}{dist}
\DeclareMathOperator{\divr}{div}
\DeclareMathOperator{\supp}{\mathrm{supp}}
\newcommand{\dif}[1]{\,\mathrm{d}{#1}}
\newcommand{\norm}[1]{\lVert #1 \rVert}
\title[Hyperbolic equation inverse stability]{Inverse stability for hyperbolic equations with different initial conditions}
\author{Shiqi Ma}
\address{School of Mathematics, Jilin University, Changchun, 130012, China}
\email{mashiqi@jlu.edu.cn, mashiqi01@gmail.com}
\begin{document}

\begin{abstract}
	We establish Lipschitz stability for both the potential and the initial conditions from a single boundary measurement in the context of a hyperbolic boundary initial value problem.
	In our setting, the initial conditions are allowed to differ for different potentials. Compared to the traditional B-K method, our approach does not require the time reflection step. This advantage makes it possible to apply our method to the fixed angle inverse scattering problem, which remains unresolved for the single incident wave case. To achieve our result, we impose certain pointwise positivity assumption on the difference of initial conditions. The assumption generalizes previous stability results that usually assume the difference to be zero. We propose the initial-potential problem and prove a potential inverse stable recovery result of it. The initial-potential problem serves as an attempt to relate initial boundary value problem with the scattering problem, and to explore the possibility to relax the positivity requirement on the initial data. We also establish a new pointwise Carleman estimate, whose proof is significantly shorter and the reasoning is much clearer than traditional ones.
	
	\medskip
	
	\noindent{\bf Keywords:}~~Inverse problems, simultaneous recovery, Lipschitz stability, initial-potential problem, Carleman estimates.
	
	\medskip
	
	{\noindent{\bf 2020 Mathematics Subject Classification:}~~35R30, 35Q60, 35J05, 31B10, 78A40}.
	
\end{abstract}

\maketitle


\section{Introduction} \label{sec:intro-PR25}

Let $\Omega$ be a bounded domain in $\Rn$ with smooth boundary $\partial \Omega$.
Let $T > 0$ and denote $\square := \partial_t^2 - \Delta$ as the wave operator.
We investigate the following hyperbolic initial boundary value problem (IBVP),
\begin{equation} \label{eq:1-PR25}
	\left\{\begin{aligned}
		(\square + q(x)) u(t,x) & = 0, && (t,x) \in Q_T := (0,T) \times \Omega, \\
		u(t,x) & = g(t,x), && (t,x) \in \Sigma_T := (0,T) \times \partial \Omega, \\
		u(0,x) = a(x), \ \partial_t u(0,x) & = b(x), && x \in \Omega,
	\end{aligned}\right.
\end{equation}
under the following regularity settings:
\begin{equation} \label{eq:1con-PR25}
	q \in C_c^s(\overline \Omega), \quad g \in H^{s-1/2}(\Sigma_T), \quad a \in H^{s + 1}(\Omega), \quad b \in H^{s}(\Omega),
\end{equation}
for certain integer $s > 0$ which shall be specified later.
The well-posedness of the system \eqref{eq:1-PR25}-\eqref{eq:1con-PR25} is classical, see e.g.~\cite{EvanPDEs}*{Chapter 7} and \cite{llt1986non}.

The inverse stability of hyperbolic equations has been extensively studied in the past decades.
In 1981, Bukhgeim and Klibanov presented their result in the paper \cite{BukhgeimKlibanov}, which later was translated into English.
In that paper, the authors used Carleman estimates to give stability, and this is summarized in \cite{Klibanov1992}.
We could briefly revise the method by reviewing \cite{Masahiro1999}.
Yamamoto in \cite{Masahiro1999} studied the system \eqref{eq:1-PR25}.
He indicated the connection between the inverse potential problem of recovering $q$ of \eqref{eq:1-PR25} and the inverse source problem of recovering $f$ of the following PDE
\begin{equation*}
	\left\{\begin{aligned}
		(\square + q) u & = f(x) R(t,x), && \text{in} \ Q_t, \\
		u & = 0, && \text{on} \ \Sigma_T, \\
		u(0,x) & = 0, \ \partial_t u(0,x) = 0, && x \in \Omega.
	\end{aligned}\right.
\end{equation*}
Indeed, when subtracting \eqref{eq:1-PR25} for $j=1$ and $j=2$ with $a_1 = a_2$, $a_1 = b_2$, we obtain the above PDE with $u = u_1 - u_2$, $q = q_1 - q_2$, $f = q_1 - q_2$ and $R = u_2$.
By taking the time derivative of $u$, we further obtain
\begin{equation*}
	\left\{\begin{aligned}
		(\square + q) v & = f(x) \partial_t R(t,x), && \text{in} \ Q_t, \\
		v & = 0, && \text{on} \ \Sigma_T, \\
		v(0,x) & = 0, \ \partial_t v(0,x) = f(x) R(0,x), && x \in \Omega,
	\end{aligned}\right.
\end{equation*}
with $v = \partial_t u$.
Then, by combining a weighted energy estimate with a Carleman estimate, and assuming that $|R(0,x)| \geq C > 0$ for some fixed constant $C$, the norm of $f$ can be bounded by that of $\partial_\nu v$ on the lateral boundary, obtaining a Lipschitz stability of the form
\[
\norm{q_1 - q_2} \leq C \norm{(u_1 - u_2)|_{\Sigma_T}}.
\]
The condition $|R(0,x)| \geq C > 0$ is in fact 
\begin{equation} \label{eq:age0-PR25}
	|a(x)| \geq C > 0
\end{equation}
because $R = u_2$.
This method is sometimes referred to as the B-K method in recognition of Bukhgeim and Klibanov's contribution, and whether the condition \eqref{eq:age0-PR25} can be removed or even just relaxed in this method still remains open.
In our first result Theorem \ref{thm:1-PR25}, we need to impose this condition.
And in Theorem \ref{thm:2-PR25} we see conditions like \eqref{eq:age0-PR25} can be lifted when the initial conditions and the potential are coupled.
Following the scheme of the B-K method, many studies have been conducted.
\cite{IY01} studied the homogeneous Neumann boundary data case, i.e., $\partial_\nu u = 0$.
\cite{Imanuvilov2001} proved a Lipschitz stable recovery for the potential using interior data $u_1 - u_2$ in $(0,T) \times \omega$ where $\omega \subset \Omega$ satisfies
\(
\{ x \in \partial \Omega \,;\, (x - x_0) \cdot \nu(x) \geq 0\} \subset \partial \omega
\)
for a fixed $x_0 \notin \overline \Omega$.
In \cites{IY01, Imanuvilov2001}, the dimension is limited to $n \leq 3$.
\cites{Be04, Klibanov2006Lipschitz} applied the B-K method to the sound speed case and obtained Lipschitz stability results.
Klibanov in his recent monograph
\cite{Klibanov2021book}*{Section 3.6} obtained a Lipschitz stability for the sound speed.

The recent works \cites{Klibanov2022point, Klibanov2023plane} applied this method to inverse scattering problems for the sound speed in which the hyperbolic system is driven by a point source.
They obtained H\"older stability estimates.
The B-K method has also been applied to other types of DPEs.
For example, it was used for parabolic equations in \cite{IY1998parabolic}, and for Schr\"odinger equations in \cites{Baudouin2002Schrodinger, Huang2020Stability}.
In the case of partial data, Isakov and Yamamoto \cite{Isakov2003Stability} established Lipschitz stability for an inverse source problem using measurements on the subboundary $(0,T) \times \Gamma$ where $\Gamma \subset \partial \Omega$.
\cite{IY2003Determination} established H\"older stability for the sound speed recovery using data on a subboundary satisfying some geometric conditions.
Bellassoued proved logarithmic stability for the potential in \cite{Bellassoued2004log}, using the lateral boundary data on $(0,T) \times \Gamma$ where $\Gamma$ is an arbitrarily small subset of $\partial \Omega$.
Then, this logarithmic stability was generalized to the sound speed case in \cite{Bellassoued2006acoustic} and to the Schr\"odinger equation in \cite{Bellassoued2009Schrodinger}.
There are other works in this field and we only listed a few.
For more references, we refer the readers to monographs \cites{Klibanov2004book, Bellassoued2017book, Klibanov2021book, Yamamoto2025book} and references therein.

As far as we know, all previous work for inverse stability of hyperbolic IBVP, except for \cite{Cipolatti2011finite}, assumes that $a_1 = a_2$ and $b_1 = b_2$, i.e.~the initial conditions are fixed.
In the article \cite{Cipolatti2011finite}, the authors considered the case where both the potential and initial conditions differ between $j=1$ and $j=2$, and recovered them all.
However, their approach requires multiple lateral boundary measurements as well as an additional internal observation of $u(t,x)$ at some $t = T_0 \in (0,T)$.

Besides the initial conditions issue, the B-K method needs a \textit{time reflection} step, which requires extending the solution $u(t,x)$ from $t > 0$ to $t < 0$ by $u(-t,x) := u(t,x)$,
see for example \cite{IY01}*{(3.8)}, \cite{Imanuvilov2001}*{(3.9)}, \cite{IY2003Determination}*{(3.4)}, \cite{Bellassoued2004log}*{(3.20)}, \cite{Huang2020Stability}*{after (3.1)}.
Otherwise, the Carleman estimates employed would introduce a boundary term at $t=0$, which would jeopardize the whole derivation.
By a time reflection, the boundary term moves from $t = 0$ to $t = -T$, then it can be handled well.
But, to perform the time reflection,
the initial conditions have to be the same for $u_1$ and $u_2$.

In this work, we lift both the time reflection restriction and the initial condition issue, and we recover the potential together with initial conditions using only a single boundary measurement.
Specifically, for two solutions $u_j~(j=1,2)$ of \eqref{eq:1-PR25} corresponding to different potentials $q_j$ and different initial conditions $a_j$, $b_j$, we establish a Lipschitz stability result of the following form,
\[
\norm{q_1 - q_2} + \norm{a_1 - a_2} + \norm{b_1 - b_2} \lesssim \norm{(u_1 - u_2)|_{\Sigma_T}}.
\]
To achieve this result, we impose certain pointwise positivity assumption on the differences of initial conditions.

\begin{asm} \label{asm:assu-PR25}
	Assume the following pointwise conditions hold in $\Omega$,
	\begin{equation} \label{eq:ass1-PR25}
		\left\{\begin{aligned}
			|\Delta (a_1 - a_2)| & \leq \widetilde C(|a_1 - a_2| + |\nabla (a_1 - a_2)|), \\
			|\Delta (b_1 - b_2)| & \leq \widetilde C(|b_1 - b_2| + |\nabla (b_1 - b_2)|).
		\end{aligned}\right.
	\end{equation}
\end{asm}

Assumption \ref{asm:assu-PR25} generalizes previous stability results, which typically require both $a_1 - a_2$ and $b_1 - b_2$ vanish.
We denote by $\partial_\nu u := \nu \cdot \nabla u$ the outer normal derivative defined on $\Sigma_T$.
Our first result is as the following.

\begin{thm} \label{thm:1-PR25}
	Let integer $s$ in \eqref{eq:1con-PR25} satisfies $s \geq \lceil n/2 \rceil + 1$.
	Let $u_j$ satisfies \eqref{eq:1-PR25}-\eqref{eq:1con-PR25}
	with $q_j$, $a_j$, $b_j$ and $g_j~(j=1,2)$, and the corresponding compatibility conditions are satisfied up to order $s$.
	Assume there are two positive constants $C_1$, $C_2$ such that
	\begin{equation} \label{eq:ass2-PR25}
		\text{either} \ \ ``\norm{q_1}_{L^\infty(\Omega)} \leq C_1, \ \inf_{x \in \overline \Omega} a_2(x) \geq C_2\text{''}
		\ \ \text{or} \ \
		``\norm{q_2}_{L^\infty(\Omega)} \leq C_1, \ \inf_{x \in \overline \Omega} a_1(x) \geq C_2\text{''}.
	\end{equation}
	Under Assumption \ref{asm:assu-PR25}, there exist two positive numbers $T$ and $C = (C_1, C_2, \widetilde C, T)$ such that
	\begin{align}
		& \, \norm{q_1 - q_2}_{L^2(\Omega)} + \norm{a_1 - a_2}_{H^1(\Omega)} + \norm{b_1 - b_2}_{H^1(\Omega)} \nonumber \\
		\leq & \, C \big( \norm{u_1 - u_2}_{H^3(\Sigma_T)} + \norm{\partial_\nu (u_1 - u_2)}_{H^2(\Sigma_T)} \big). \label{eq:rs1-PR25}
	\end{align}
\end{thm}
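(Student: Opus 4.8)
The plan is to reduce to a difference equation, extract the potential algebraically from the second time–trace at $t=0$, and then feed everything into the new pointwise Carleman estimate, using the positivity of $a_2$ and Assumption~\ref{asm:assu-PR25} to close the estimate. First I would set $u := u_1 - u_2$, $q := q_1 - q_2$, $a := a_1 - a_2$, $b := b_1 - b_2$. Writing $q_1 u_1 - q_2 u_2 = q_1 u + q\,u_2$ and using $(\square + q_2)u_2 = 0$, the subtraction of the two copies of \eqref{eq:1-PR25} gives
\[
(\square + q_1)\,u = -\,q\,u_2 \ \ \text{in } Q_T, \qquad u(0,\cdot)=a, \quad \partial_t u(0,\cdot)=b \ \ \text{in } \Omega,
\]
with lateral data $u|_{\Sigma_T}=g_1-g_2$ and $\partial_\nu u|_{\Sigma_T}$ being exactly the measured quantities. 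Under the second alternative in \eqref{eq:ass2-PR25} one expands around $u_1$ instead, obtaining $(\square+q_2)u=q\,u_1$, so without loss of generality I assume the first alternative, i.e.\ $\inf a_2 \geq C_2$ and $\norm{q_1}_{L^\infty}\leq C_1$.

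The crucial observation is obtained by differentiating the interior equation in $t$ and evaluating at $t=0$. Since $\partial_t^2 u = \Delta u - q_1 u - q\,u_2$ and $\partial_t^3 u = \Delta \partial_t u - q_1 \partial_t u - q\,\partial_t u_2$, and because $u_2(0,\cdot)=a_2$, $\partial_t u_2(0,\cdot)=b_2$, one finds the algebraic relations
\[
\partial_t^2 u(0,\cdot) = \Delta a - q_1 a - q\,a_2, \qquad \partial_t^3 u(0,\cdot) = \Delta b - q_1 b - q\,b_2 .
\]
The first of these is the key identity: as $a_2 \geq C_2 > 0$ we may solve $q = a_2^{-1}\big(\Delta a - q_1 a - \partial_t^2 u(0,\cdot)\big)$, and Assumption~\ref{asm:assu-PR25} bounds $|\Delta a|$ pointwise by $|a|+|\nabla a|$, so that together with $\norm{q_1}_{L^\infty}\leq C_1$ we get
\[
\norm{q}_{L^2(\Omega)} \lesssim \norm{a}_{H^1(\Omega)} + \norm{\partial_t^2 u(0,\cdot)}_{L^2(\Omega)} .
\]
This reduces the recovery of $q$ to controlling the initial data $a$ in $H^1$ and the second time–trace $\partial_t^2 u(0,\cdot)$ in $L^2$, both of which are interior quantities to be bounded by the lateral data.

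Next I would apply the new pointwise Carleman estimate to $u$ and to its time derivatives $\partial_t u$, $\partial_t^2 u$, integrate over $Q_T$, and choose $T$ large together with a weight $\varphi$ adapted so that the $t=T$ boundary term carries a favorable sign. This produces an observability inequality schematically of the form
\[
\sum_{k=0}^{2}\Big(\norm{\partial_t^k u(0,\cdot)}_{H^1(\Omega)}^2 + \norm{\partial_t^{k+1} u(0,\cdot)}_{L^2(\Omega)}^2\Big) \lesssim \frac{1}{\tau}\int_{Q_T}|q|^2\sum_{k=0}^{2}|\partial_t^k u_2|^2\, e^{2\tau\varphi}\dif{x}\dif{t} + \mathcal{B},
\]
where $\mathcal{B}$ collects the lateral contributions dominated by $\norm{u}_{H^3(\Sigma_T)}+\norm{\partial_\nu u}_{H^2(\Sigma_T)}$. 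The point at which the absence of time reflection matters is precisely here: the $t=0$ boundary terms are not discarded but \emph{retained} as the good quantities $\norm{a}_{H^1}$, $\norm{b}_{H^1}$ and $\norm{\partial_t^2 u(0,\cdot)}_{L^2}$ that we wish to estimate, and the pointwise identity is arranged so these traces appear with the correct sign. The regularity $s \geq \lceil n/2\rceil+1$ guarantees the Sobolev embeddings needed to justify the pointwise manipulations and the two time differentiations, and the orders $3$ and $2$ on $\Sigma_T$ reflect those differentiations together with a trace loss.

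Finally I would close the argument by absorption: the source term on the right is controlled by $\norm{q}_{L^2}^2$, hence by $\norm{a}_{H^1}^2 + \norm{\partial_t^2 u(0,\cdot)}_{L^2}^2$ through the relation above, and the factor $\tau^{-1}$ renders it absorbable into the left-hand side once $\tau$ is taken large. This yields $\norm{a}_{H^1}+\norm{b}_{H^1}+\norm{\partial_t^2 u(0,\cdot)}_{L^2}$ controlled by the lateral data, after which the identity for $q$ delivers $\norm{q}_{L^2}$ and hence \eqref{eq:rs1-PR25}. I expect the main obstacle to be exactly the treatment of the $t=0$ boundary term: in the classical B--K scheme its indefinite sign forces the even-in-$t$ extension, which is unavailable once $a_1 \neq a_2$, so the delicate work is to design the weight and the pointwise identity so that, under $a_2 \geq C_2$ and Assumption~\ref{asm:assu-PR25}, the $t=0$ traces are simultaneously the estimated quantities and carry the correct sign, while the coupling between $q$ and the initial data through $\partial_t^2 u(0,\cdot)=\Delta a - q_1 a - q\,a_2$ is resolved without circularity.
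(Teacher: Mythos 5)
Your skeleton coincides with the paper's: the paper also passes to $v=\partial_t(u_1-u_2)$, uses exactly your identity $\partial_t v(0,\cdot)=(q_2-q_1)a_2+(\Delta-q_1)(a_1-a_2)$, extracts $q_1-q_2$ from it via $\inf a_2\geq C_2$ (through the pointwise inequality $(x+y)^2\geq \tfrac12 x^2-y^2$ rather than dividing by $a_2$, but this is the same content), and absorbs the source term by the Gaussian-in-time decay of the weight, yielding the factor $\lambda^{-1/2}$ as in \eqref{eq:qT0-PR25}. The genuine gap is that your central ``observability inequality'' --- the estimate with the $t=0$ traces on the left --- is asserted rather than proved, and the mechanism you offer for it (``the pointwise identity is arranged so these traces appear with the correct sign'') is not available. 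In the Carleman estimate of Proposition \ref{prop:InCa-PR25} the $t=0$ boundary contribution is $\mathcal B=u_t[-4\lambda^2|\nabla\varphi|^2u-4\lambda\nabla\varphi\cdot\nabla u-(2n+4\beta)\lambda u]|_{t=0}$, a sign-indefinite product of $u_t$ with lower-order terms; no choice of weight makes it definite, since the cross term $\nabla u\, u_t$ has no sign. The paper's resolution is a three-step combination that your plan is missing: the weighted energy estimate (Proposition \ref{prop:weg1-PR25}) supplies the positive $t=0$ energy and its interior terms are cancelled against the Carleman estimate; Cauchy--Schwarz applied to $\mathcal B$ then leaves a deficit of powers of $\lambda$ in the spatial terms at $t=0$; and this deficit is repaired by an \emph{elliptic} Carleman estimate on the slice $\{t=0\}$ (Lemma \ref{lem:pwC2-PR25}), at the price of a term $\lambda\int_{\Omega_0}e^{2\lambda\varphi}|\Delta u(0,\cdot)|^2$, which is absorbed only because of the hypothesis \eqref{eq:WeC2-PR25}. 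This is precisely where Assumption \ref{asm:assu-PR25} does its real work --- and note that running this machinery on $v=\partial_t u$, whose $t=0$ trace is $b_1-b_2$, requires the \emph{$b$-part} of \eqref{eq:ass1-PR25}, a hypothesis your argument never invokes. That you can apparently do without it is a symptom that the inequality you wrote down is not reachable along the route you describe.

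A second, concrete defect: you propose to apply the estimate also at the level $\partial_t^2 u$. That application would fail, because the $t=0$ trace of $\partial_t^2 u$ is $\Delta a-q_1a-q\,a_2$, whose Laplacian involves $\Delta q$ and fourth derivatives of $a$ and is controlled by no hypothesis of the theorem; hence the condition \eqref{eq:WeC2-PR25} needed to neutralize the $t=0$ boundary term is unavailable at this level. Fortunately it is also unnecessary: the paper runs the combined estimate only on $u$ (Lemma \ref{lem:abes-PR25}, to control $\norm{a_1-a_2}_{H^1}$) and on $v=\partial_t u$, since the left-hand side of \eqref{eq:WeC3-PR25} applied to $v$ already contains $|\partial_t v(0,\cdot)|^2=|\partial_t^2 u(0,\cdot)|^2$, from which $q_1-q_2$ is extracted; the two resulting inequalities are then summed and the $\lambda^{-1/2}\int_{\Omega_0}e^{2\lambda\varphi}(q_1-q_2)^2$ terms absorbed for large $\lambda$. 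A minor related point: your norm bound $\norm{q}_{L^2}\lesssim\norm{a}_{H^1}+\norm{\partial_t^2u(0,\cdot)}_{L^2}$ must be kept in pointwise, weighted form (all terms carrying $e^{2\lambda\varphi(0,x)}$), otherwise it does not mesh with the absorption step; this is fixable, but the missing derivation of the $t=0$ trace estimate itself is not a detail --- it is the theorem's core.
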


\begin{rem}
	Theorem \ref{thm:1-PR25} provides the first result enabling the simultaneous stable recovery of the potential, initial position, and initial velocity using only a single lateral boundary measurement.
	When $a_1 = a_2$ and $b_1 = b_2$, Assumption \ref{asm:assu-PR25} is automatically satisfied, and thus Theorem \ref{thm:1-PR25} generalizes classical results such as those in \cites{BukhgeimKlibanov, Masahiro1999, IY01}.
	Furthermore, the proof does not use time reflection.
\end{rem}

\begin{rem}
	Both the Dirichlet and Neumann data appear on the right-hand-side of \eqref{eq:rs1-PR25}.
	This means both the boundary data can differ for $j=1$ and $j=2$.
	By contrast, in previous works such as \cites{IY01, Imanuvilov2001, Bellassoued2004log}, the boundary condition must be fixed.
	This demonstrates that Theorem \ref{thm:1-PR25} offers greater flexibility in the choice of boundary data.
\end{rem}

\begin{rem}
	It is worth noting that only one of $a_1$ and $a_2$ is required to satisfy the positivity condition.
	For instance, choose $a_1 \in C^1(\overline \Omega)$ such that $\inf_\Omega a_1 = 1$ and $\sup_\Omega a_1 = 2$ and let $a_2 = a_1 - \frac 3 2$.
	In this case, $a_1$ satisfies \eqref{eq:age0-PR25} but $a_2$ does not.
	Nevertheless, since $a_1 - a_2$ satisfies \eqref{eq:ass1-PR25}, the conclusion of Theorem \ref{thm:1-PR25} still holds.
\end{rem}

\begin{rem}
	The result in Theorem \ref{thm:1-PR25} is closely related to the thermo- and photo-acoustic tomography (TAT/PAT) problem.
	Mathematically speaking, the TAT/PAT problem is concerned with recovering a sound speed $c(x)$ and an internal source $f(x)$ from the boundary measurement $u|_{(0,+\infty) \times \partial \Omega}$ of the following PDE,
	\begin{equation*}
		\left\{\begin{aligned}
			c(x)^{-2} \partial_t^2 u - \Delta u & = 0, && \text{in} \ (0,+\infty) \times \R^3, \\
			u(0,x) = f(x), \ \partial_t u(0,x) & = 0, && x \in \R^3.
		\end{aligned}\right.
	\end{equation*}
	Under certain monotonicity conditions, \cite{Kian2025pat} shows that the boundary data $u|_{(0,+\infty) \times \partial \Omega}$ uniquely determines $(c,f)$, see also \cite{KianLiu2025}.
	Instead of using Carleman estimates, they generalize the method in \cite{Liu2015pat}, which expands the temporal Fourier transform of the solution with respect to the frequency.
	Compared to the TAT/PAT problem, the inverse problem of the IBVP considered in Theorem \ref{thm:1-PR25} enjoys Lipschitz stability.
	Moreover, this stability is achieved using only finite-time measurements, whereas the TAT/PAT approach requires measurements over an infinite time interval, as it relies on a temporal Fourier or Laplace transform.
\end{rem}

The inverse problem of simultaneously recovering the potential and initial data for \eqref{eq:1-PR25} is closely connected to the fixed angle inverse scattering problem for the following hyperbolic PDE (with $\theta \in \mathbb S^{n-1}$),
\begin{equation} \label{eq:1fix-PR25}
	\left\{\begin{aligned}
		(\square + q(x)) U(t,x) & = 0, && \text{in} \ \R^{1+n}, \\ 
		U(t,x) & = \delta(t - \theta \cdot x), && \text{when} \ t \ll 0.
	\end{aligned}\right.
\end{equation}
The fixed angle inverse scattering concerns the recovery of the potential $q$ from the total wave field $U|_{(-\infty,T) \times \partial \Omega}$, given that $\supp q \subset \Omega$.
To isolate the singularity associated with the $\delta$ source from the total wave $U$, one could use the progressive wave expansion
\(
U(t,x) = \delta(t - \theta \cdot x) + u(t,x) H(t - \theta \cdot x),
\)
then
\begin{equation} \label{eq:1fi2-PR25}
	\left\{\begin{aligned}
		(\square + q(x)) u(t,x) & = 0, && \text{in} \ \{t > \theta \cdot x \}, \\ 
		u(t,x) & = -\frac 1 2 \mathcal I q(x), && \text{on} \ \{t = \theta \cdot x \},
	\end{aligned}\right.
\end{equation}
where $\mathcal I$ is the X-ray transform in the direction $\theta$,
\[
\mathcal I q(x) := \int_{-\infty}^t q(x_0 + s\theta) \dif s, \quad \text{where} \quad x = x_0 + t\theta.
\]
The fixed angle inverse scattering problem using data generated by a single incident wave remains unresolved.
The most up-to-date result in this regard is the work \cites{RakeshSalo2, RakeshSalo1} by Rakesh and Salo.
More recently, Oksanen, Rakesh, and Salo \cites{oksanen2024rigidityB, oksanen2024rigidityA} have established several rigidity results for the recovery of a sound speed in the fixed angle scattering problem.

Even though the scattering problem \eqref{eq:1fi2-PR25} and the IBVP \eqref{eq:1-PR25} are governed by the same hyperbolic equation, their corresponding inverse problems exhibit fundamentally different behaviors.
The most significant difference is the configuration of initial conditions.
They are different in the following two ways:
\begin{enumerate}
	\item[\textbf{(D1)}] The initial conditions for \eqref{eq:1-PR25} are prescribed on the horizontal hypersurface $\{t = 0\} \times \Omega$, whereas for the scattering problem \eqref{eq:1fi2-PR25}, the initial data is given on the characteristic surface $\{t = \theta \cdot x\}$ (a $45^\circ$ slanted plane in spacetime).
	
	\item[\textbf{(D2)}] In \eqref{eq:1-PR25}, the initial conditions are independent of the potential $q$, while in \eqref{eq:1fi2-PR25}, the initial condition is intrinsically coupled to $q$.
\end{enumerate}

These two differences pose challenges when applying stable recovery results such as Theorem \ref{thm:1-PR25} of \eqref{eq:1-PR25} to \eqref{eq:1fi2-PR25}.
\textbf{(D1)} implies that the Carleman estimates applicable to \eqref{eq:1-PR25} are no longer valid for \eqref{eq:1fi2-PR25} due to the characteristic initial surface.
To circumvent this, Rakesh and Salo used two incident wave in \cite{RakeshSalo1} to overcome this issue and obtained a Lipschitz stable recovery for the potential.
\textbf{(D2)} introduces a coupling between the initial data and the potential, which exacerbates the nonlinearity of the forward map $q \mapsto U|_{(-\infty, T) \times \partial \Omega}$.

Regarding these considerations, our second result focuses on the following IBVP,
\begin{equation} \label{eq:2-PR25}
	\left\{\begin{aligned}
		(\square + q) u & = 0 && \mbox{in} \ \ Q_T, \\
		u & = g && \mbox{on} \ \ \Sigma_T, \\
		u = 0, \ \partial_t u & = q && \mbox{on} \ \ \{t = 0\} \times \Omega.
	\end{aligned}\right.
\end{equation}
We call it the hyperbolic \emph{initial-potential problem}.
We study the initial-potential problem because of the following reasons:
\begin{enumerate}
	\item The initial-potential problem \eqref{eq:2-PR25} mimics the scattering problem \eqref{eq:1fi2-PR25} in that the initial conditions in both are directly coupled to the potential.
	
	\item A key advantage of the initial-potential problem \eqref{eq:2-PR25} is that, as established in Theorem \ref{thm:2-PR25}, Lipschitz stability for the potential does \emph{not} require a positivity condition like \eqref{eq:age0-PR25}.
\end{enumerate}
We have the following result.

\begin{thm} \label{thm:2-PR25}
	Let the integer $s$ in \eqref{eq:1con-PR25} satisfies $s \geq \lceil n/2 \rceil$.
	Let $u_j$ satisfies \eqref{eq:2-PR25} with $g_j \in H^{s-1/2}(\Sigma)$ and $q_j \in C^{\lceil n/2 \rceil}(\overline \Omega)~(j=1,2)$, and the corresponding compatibility conditions are satisfied up to order $s$.
	Then, there are two positive number $T$ and $C = C(T)$ such that
	\begin{align*}
		C \norm{q_1 - q_2}_{L^2(\Omega)}
		& \leq \norm{u_1 - u_2}_{H^1((0,T)\times \Omega)} + \norm{\partial_\nu (u_1 - u_2)}_{L^2((0,T)\times \Omega)} \nonumber \\
		& \quad + \norm{u_1 - u_2}_{H^1(\partial \Omega)} + \norm{\partial_\nu (u_1 - u_2)}_{L^2(\partial \Omega)}.
	\end{align*}
\end{thm}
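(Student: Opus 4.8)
The plan is to reduce to a single function and then couple the paper's pointwise Carleman estimate with an energy identity that exploits the defining feature of \eqref{eq:2-PR25}, namely that the initial velocity \emph{equals} the unknown potential difference. First I set $u:=u_1-u_2$ and $q:=q_1-q_2$ and subtract the two copies of \eqref{eq:2-PR25}; using $q_1u_1-q_2u_2=q\,u_1+q_2u$ this gives
\[
(\square+q_2)u=-q\,u_1 \ \text{ in }Q_T,\qquad u|_{\Sigma_T}=g_1-g_2,\qquad u(0,\cdot)=0,\quad \partial_t u(0,\cdot)=q.
\]
The decisive point is that $\norm{q}_{L^2(\Omega)}=\norm{\partial_t u(0,\cdot)}_{L^2(\Omega)}$, so it suffices to bound the initial velocity by the observed quantities. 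Moreover $u(0,\cdot)\equiv0$ forces $\nabla_x u(0,\cdot)\equiv0$, which is precisely what will let us dispense with the time reflection. For the source I need only the \emph{upper} bound $\norm{u_1}_{L^\infty(Q_T)}\le C$, furnished by well-posedness and Sobolev embedding once $s\ge\lceil n/2\rceil$; crucially, and in contrast to Theorem \ref{thm:1-PR25}, no \emph{lower} bound on any initial datum is needed.

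Next I fix a weight $\varphi(t,x)=e^{\lambda(|x-x_0|^2-\beta t^2)}$ with $x_0\notin\overline\Omega$, peaked at $t=0$, localize by a cutoff $\chi(t)$ supported away from $t=T$, and apply the paper's pointwise Carleman estimate to $\chi u$, integrating over $Q_T$. Writing $\theta:=e^{2s\varphi}$, the temporal boundary terms at $t=0$ all vanish because $u(0,\cdot)=0$, $\nabla_x u(0,\cdot)=0$ and $\partial_t\varphi(0,\cdot)=0$ — so no extension to $t<0$ is required — while the $t=T$ terms are annihilated by the cutoff (as $\theta$ is small on $\operatorname{supp}\chi'$) and the lateral terms are exactly the measured Dirichlet and Neumann data on $\Sigma_T$. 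This yields
\[
s\!\int_{Q_T}\!\theta\big(|\partial_t u|^2+|\nabla u|^2\big)+s^3\!\int_{Q_T}\!\theta\,|u|^2 \ \lesssim\ \int_{Q_T}\theta\,|\square u|^2 + (\text{data}).
\]
Since $\square u=-qu_1-q_2u$, the source splits as $\int_{Q_T}\theta|\square u|^2\lesssim \norm{u_1}_\infty^2\int_{Q_T}\theta|q|^2+\norm{q_2}_\infty^2\int_{Q_T}\theta|u|^2$, and the second summand is absorbed by $s^3\int_{Q_T}\theta|u|^2$ once $s$ is large.

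To convert this interior information into a bound on $\norm{q}_{L^2}$ I use the identity obtained from $\int_0^T\partial_t(\theta|\partial_t u|^2)\dif t$ together with $\partial_t^2u=\Delta u-q_2u-qu_1$, namely
\[
\int_\Omega\theta(0,\cdot)|q|^2 = \int_\Omega\theta(T,\cdot)|\partial_t u(T,\cdot)|^2 -2s\!\int_{Q_T}\!\varphi_t\,\theta\,|\partial_t u|^2 -2\!\int_{Q_T}\!\theta\,\partial_t u\,(\Delta u-q_2u-qu_1).
\]
The $t=T$ term is again removed by the cutoff; integrating the $\Delta u$ contribution by parts in $x$ produces the lateral datum $\int_{\Sigma_T}\theta\,\partial_t u\,\partial_\nu u$ plus interior integrals of $|\nabla u|^2$, $|\partial_t u|^2$, $|u|^2$ that are all dominated by the Carleman good terms (its $t=0$ endpoint $\int_\Omega\theta(0)|\nabla u(0)|^2$ vanishes as well). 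The loop is closed by the crucial smallness estimate: since $q=q(x)$ is $t$-independent and $\varphi$ is peaked at $t=0$, a Gaussian computation gives $\int_{Q_T}\theta|q|^2=\int_\Omega|q|^2\!\int_0^T\!\theta\dif t\le\kappa(s)\int_\Omega\theta(0,\cdot)|q|^2$ with $\kappa(s)=O(s^{-1/2})$. Every occurrence of $\int_{Q_T}\theta|q|^2$ (from the source and from the cross term $2\int_{Q_T}\theta\,qu_1\partial_t u$, estimated by Young) is therefore a vanishing fraction of the left-hand side; fixing $s$ large to absorb them and using $\theta(0,\cdot)\ge c>0$ on $\overline\Omega$ gives $\norm{q}_{L^2(\Omega)}^2\lesssim(\text{data})$, with $C=C(T)$ once all parameters are frozen.

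I expect the main obstacle to be reconciling two competing demands on $\varphi$. The source term $\int_{Q_T}\theta|q|^2$ must be rendered negligible against $\int_\Omega\theta(0)|q|^2$, which forces $\varphi$ to concentrate at $t=0$ and is exactly what supplies the $O(s^{-1/2})$ gain; simultaneously $\varphi$ must obey the pseudoconvexity condition underlying the pointwise Carleman estimate throughout $Q_T$. Checking that $|x-x_0|^2-\beta t^2$ meets both, that the $t=0$ boundary terms genuinely vanish so that time reflection — and hence the hypothesis $a_1=a_2$, $b_1=b_2$ — can be avoided, and that the $t=T$ cutoff contributes only negligibly, is the technical heart of the argument; here the coupling $\partial_t u(0)=q$ is what removes the positivity requirement \eqref{eq:age0-PR25} of Theorem \ref{thm:1-PR25}, since it makes the initial velocity equal to the target with coefficient one.
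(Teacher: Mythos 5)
Your proposal is correct and rests on the same pillars as the paper's proof: subtract to get $(\square+q_2)u=-qu_1$ with $u(0,\cdot)=0$, $\partial_t u(0,\cdot)=q$, so the $t=0$ Carleman boundary flux vanishes (no time reflection and no positivity condition needed), couple a weighted energy identity capturing $\int_\Omega\theta(0,\cdot)|q|^2$ with the Carleman estimate of Proposition \ref{prop:InCa-PR25}, and absorb the source via the Gaussian-in-time gain $\int_{Q_T}\theta\,|q u_1|^2\lesssim s^{-1/2}\int_\Omega\theta(0,\cdot)|q|^2$, which is exactly the paper's \eqref{eq:qT0-PR25}; your energy identity $\int_0^T\partial_t(\theta|\partial_t u|^2)$ is Proposition \ref{prop:weg1-PR25} in unconjugated form. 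You deviate in two implementation choices. First, the paper never cuts off in time: in Proposition \ref{prop:InCa-PR25} the flux $F^0|_{t=T}$ has a favorable sign and the $\Omega_T$ term lands on the good side, so Proposition \ref{prop:enCr-PR25} cancels the bulk and $t=T$ contributions outright, whereas your cutoff $\chi(t)$ generates commutators $2\chi'\partial_t u+\chi''u$ near $t=T$ whose absorption is not automatic from smallness of $\theta$ alone — you must either invoke the forward energy bound $\norm{u_1-u_2}_{H^1(Q_T)}\lesssim\norm{q_1-q_2}_{L^2(\Omega)}+\text{data}$ and absorb $e^{-2s\delta}\norm{q_1-q_2}^2$ for large $s$ (the standard device), or note that the theorem's right-hand side already contains $\norm{u_1-u_2}_{H^1((0,T)\times\Omega)}$, which controls them directly; either way this is routine to fill. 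Second, you bypass the elliptic Carleman estimate at $t=0$ (Lemma \ref{lem:pwC2-PR25}): legitimate here since $u(0,\cdot)=0$ forces $\mathcal B=0$ in \eqref{eq:WeC1-PR25}, as the paper itself remarks after Proposition \ref{prop:enCr-PR25}, while the paper routes through Lemma \ref{lem:abes-PR25} and thus carries the $t=0$ boundary terms on $\partial\Omega$ appearing in the statement — your version yields a slightly stronger estimate without them. One bookkeeping caution: the Carleman estimate \eqref{eq:InCa-PR25} carries only $\beta\lambda$ in front of $|\nabla_{t,x}u|^2$, and your identity produces a term bounded by $4\beta T s\int\theta|\partial_t u|^2$, so absorption requires multiplying the Carleman estimate by a $T$-dependent constant before adding (harmless, since the source stays $O(s^{-1/2})$, and this is where $C=C(T)$ enters); also note the notational slip where you define $\varphi$ as an exponential and then exponentiate again in $\theta=e^{2s\varphi}$ — clearly $\varphi=|x-x_0|^2-\beta t^2$ is meant.
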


Finally, we present a new proof of a pointwise Carleman estimate in Proposition \ref{prop:pwCa-PR25}.
While such estimates exist in the literature—for example, in \cite{Shishatskij1986}*{\S4.4 Lemma 1}, \cites{Kubo2000Unique}, \cite{LvQi2019Unified}*{Corollary 4.1}, \cite{RakeshSalo2}*{Appendix A}, and \cite{Yamamoto2025book}—our result generalizes the one in \cite{Shishatskij1986}*{\S4.4 Lemma 1} to the case of an anisotropic leading order term.
Furthermore, by leveraging an algebraic inequality (Lemma \ref{lem:ajg-PR25}), our proof is significantly shorter, with clearer reasoning and computations compared to existing proofs, such as those in \cite{Shishatskij1986}*{\S4.4 Lemma 1} and \cite{LvQi2019Unified}*{Corollary 4.1}. The explicit boundary terms in Proposition \ref{prop:pwCa-PR25} play a crucial role in proving Theorems \ref{thm:1-PR25} and \ref{thm:2-PR25}.

The rest of this article is organized as follows.
In Section \ref{sec:2est-PR25} we prove a weighted energy estimate and a Carleman estimate which are necessary for our analysis.
In Section \ref{subsec:Carl-PR25} we first establish a new pointwise Carleman estimate with boundary terms, then we adopt the pointwise Carleman estimate to our case and obtain an integral form of it.
The proofs of the main results are presented in Section \ref{sec:IBVP-PR25}.

%
%
%

\section{Two estimates} \label{sec:2est-PR25}

We begin this section by proving a weighted energy estimate and a Carleman estimate. These are then combined to establish Proposition \ref{prop:enCr-PR25}, which provides an estimate of the initial conditions using only the lateral boundary and source data.

\subsection{Weighted energy estimate} \label{subsec:WEE-PR25}

For any function $u$, we adopt the following convention:
\begin{equation*} 
	u_t := \partial_t u, \quad
	u_j := \partial_{x_j} u, \quad
	\nabla u := (\partial_{x_1} u, \cdots, \partial_{x_n} u), \quad
	\nabla_{t,x} u := (\nabla_t u, \nabla u).
\end{equation*}
In what follows, for simplicity we may omit ``$\dif x$'' (resp.~``$\dif t \dif x$'') in any integral $\int [\cdots] \dif x$ (resp.~$\int [\cdots] \dif t \dif x$) unless otherwise stated.
We also use $C$ and its variants, such as $C_1$, to denote some generic constants whose particular values may change line by line.
For two quantities, we write $P \lesssim Q$ (resp.~$P \gtrsim Q$) to signify $P \leq CQ$ (resp.~$P \geq CQ$), for some generic positive constant $C$.

For a test function $p$ we have
\begin{equation*}
	\left\{\begin{aligned}
		\square (pu) & = (p \square + 2\mathcal L_p + \square p) u, \\
		\mathcal L_{e^{\lambda \varphi}} & = \lambda e^{\lambda \varphi} \mathcal L_\varphi, \\
		\square e^{\lambda \varphi} & = \lambda^2 e^{\lambda \varphi} (\mathcal L_\varphi \varphi + \lambda^{-1} \square \varphi),
	\end{aligned}\right.
\end{equation*}
where
\begin{equation*}
	\mathcal L_p := \partial_t p \partial_t - \nabla p \cdot \nabla.
\end{equation*}
Note that $\mathcal L_p$ is a first-order differential operator.
Hence, for $v = e^{\lambda \varphi} u$ we have
\begin{align}
	\square v
	& = e^{\lambda \varphi} \big[ \square + 2\lambda \mathcal L_\varphi + \lambda^2 (\mathcal L_\varphi \varphi + \lambda^{-1} \square \varphi) \big] u \label{eq:spe1-PR25} \\
	& = e^{\lambda \varphi} \square u + 2\lambda \mathcal L_\varphi v + \lambda^2 (-\mathcal L_\varphi \varphi + \lambda^{-1} \square \varphi) v. \label{eq:ape2-PR25}
\end{align}
where we used $\mathcal L_\varphi (e^{-\lambda \varphi} v) = e^{-\lambda \varphi} \mathcal L_\varphi v + (\mathcal L_\varphi e^{-\lambda \varphi}) v$ and $\mathcal L_\varphi e^{-\lambda \varphi} = -\lambda e^{-\lambda \varphi} \mathcal L_\varphi \varphi$ due to the first-order property of $\mathcal L_\varphi$.

\begin{prop}[Weighted energy estimate] \label{prop:weg1-PR25}
	Assume $\varphi \in C^2(\overline Q_T)$.
	There exists a universal constant $C > 0$, and $\lambda_0 > 0$ such that for all $\lambda \geq \lambda_0 + 1$,
	\begin{align}
		C\int_{\Omega_0} e^{2\lambda \varphi} (|\nabla_{t,x} u|^2 + \lambda^2 u^2)
		& \leq \int_{Q_T} e^{2\lambda \varphi} |\square u|^2 + \int_{\Sigma_T} |\partial_t (e^{\lambda \varphi} u) \partial_\nu(e^{\lambda \varphi} u)| \nonumber \\
		& \quad + C_\varphi(T) \int_{Q_T} e^{2\lambda \varphi} (\lambda |\nabla_{t,x} u|^2 + C_\varphi(T) \lambda^3 u^2) \nonumber \\
		& \quad + \int_{\Omega_T} e^{2\lambda \varphi} (|\nabla_{t,x} u|^2 + \mathcal C_\varphi(T) \lambda^2 u^2), \label{eq:h0u-PR25}
	\end{align}
	where
	\begin{equation*}
		\mathcal C_\varphi(T) := \sup_{Q_T} (|\nabla_{t,x} \varphi| + 1)^2 + \sup_{Q_T} |\square \varphi| + 1.
	\end{equation*}
\end{prop}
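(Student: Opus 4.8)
The plan is to run the classical energy (multiplier) method on the conjugated function $v := e^{\lambda\varphi}u$, for which the natural lateral boundary term is exactly the one appearing in \eqref{eq:h0u-PR25}. Since $\partial_t(e^{\lambda\varphi}u)=v_t$ and $\partial_\nu(e^{\lambda\varphi}u)=\partial_\nu v$, the correct multiplier is $v_t$: its spatial integration by parts produces precisely $\int_{\Sigma_T}v_t\,\partial_\nu v$, which after taking absolute values becomes the stated boundary term $\int_{\Sigma_T}|\partial_t(e^{\lambda\varphi}u)\,\partial_\nu(e^{\lambda\varphi}u)|$, while the integration in $t$ isolates the slice energies on $\Omega_0$ and $\Omega_T$.

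First I would record the energy identity. Multiplying $\square v=v_{tt}-\Delta v$ by $v_t$ and using $v_tv_{tt}=\tfrac12\partial_t(v_t^2)$ and $-v_t\Delta v=-\nabla\cdot(v_t\nabla v)+\tfrac12\partial_t|\nabla v|^2$, integration over $Q_T$ gives, after isolating the initial slice,
\[
\tfrac12\int_{\Omega_0}|\nabla_{t,x}v|^2 = \tfrac12\int_{\Omega_T}|\nabla_{t,x}v|^2 - \int_{\Sigma_T}v_t\,\partial_\nu v - \int_{Q_T}v_t\,\square v .
\]
Next I would substitute the conjugation identity \eqref{eq:ape2-PR25}, $\square v = e^{\lambda\varphi}\square u + 2\lambda\mathcal L_\varphi v + \lambda^2(-\mathcal L_\varphi\varphi+\lambda^{-1}\square\varphi)v$, and split $-\int_{Q_T}v_t\square v$ into a source, a first-order, and a zeroth-order piece.

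The source and first-order pieces are routine. Cauchy--Schwarz on the source term gives $|\int_{Q_T}v_te^{\lambda\varphi}\square u|\le\tfrac12\int_{Q_T}e^{2\lambda\varphi}|\square u|^2+\tfrac12\int_{Q_T}v_t^2$, the first summand being the leading right-hand term of \eqref{eq:h0u-PR25} and the second absorbed after expanding $v_t=e^{\lambda\varphi}(u_t+\lambda\varphi_tu)$. For the first-order term, $|2\lambda\int_{Q_T}v_t\mathcal L_\varphi v|\lesssim\lambda\sup_{Q_T}|\nabla_{t,x}\varphi|\int_{Q_T}|\nabla_{t,x}v|^2$, and the expansion $|\nabla_{t,x}v|^2=e^{2\lambda\varphi}|\nabla_{t,x}u+\lambda u\nabla_{t,x}\varphi|^2\lesssim e^{2\lambda\varphi}(|\nabla_{t,x}u|^2+\lambda^2|\nabla_{t,x}\varphi|^2u^2)$ reproduces the volume contributions $C_\varphi(T)\int_{Q_T}e^{2\lambda\varphi}(\lambda|\nabla_{t,x}u|^2+C_\varphi(T)\lambda^3u^2)$. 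The same expansion applied to $\tfrac12\int_{\Omega_T}|\nabla_{t,x}v|^2$ yields the final-slice term $\int_{\Omega_T}e^{2\lambda\varphi}(|\nabla_{t,x}u|^2+\mathcal C_\varphi(T)\lambda^2u^2)$. The decisive piece is the zeroth-order one: writing $v_tv=\tfrac12\partial_t(v^2)$ and integrating by parts in $t$, the term $\lambda^2\int_{Q_T}v_t\,\mathcal L_\varphi\varphi\,v$ produces $\tfrac{\lambda^2}{2}\int_{\Omega_T}\mathcal L_\varphi\varphi\,v^2-\tfrac{\lambda^2}{2}\int_{\Omega_0}\mathcal L_\varphi\varphi\,v^2$ together with an absorbable volume term $\tfrac{\lambda^2}{2}\int_{Q_T}\partial_t(\mathcal L_\varphi\varphi)v^2$; the $\Omega_T$ slice is bounded by $\mathcal C_\varphi(T)\lambda^2\int_{\Omega_T}e^{2\lambda\varphi}u^2$ and kept on the right, whereas the $\Omega_0$ slice is moved to the left. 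The remaining $\lambda\int_{Q_T}v_t\,\square\varphi\,v$ term is treated the same way and absorbed.

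The heart of the argument, and the only place I expect real difficulty, is the resulting left-hand slice. After transferring the $\Omega_0$ contribution, the left side equals
\[
\tfrac12\int_{\Omega_0}e^{2\lambda\varphi}\Big(|\nabla_{t,x}u+\lambda u\nabla_{t,x}\varphi|^2 + \lambda^2\,\mathcal L_\varphi\varphi\,u^2\Big),
\qquad \mathcal L_\varphi\varphi=\varphi_t^2-|\nabla\varphi|^2,
\]
and I must show it dominates $C\int_{\Omega_0}e^{2\lambda\varphi}(|\nabla_{t,x}u|^2+\lambda^2u^2)$. Completing the square, the integrand collapses to $\tfrac12|\nabla_{t,x}u+\lambda u\nabla_{t,x}\varphi|^2+\tfrac{\lambda^2}{2}\mathcal L_\varphi\varphi\,u^2$, so everything hinges on the sign of $\mathcal L_\varphi\varphi$ on the initial surface: once the spacetime gradient of $\varphi$ is timelike there, i.e.\ $\mathcal L_\varphi\varphi\ge c>0$ on $\Omega_0$, the elementary bound $|\nabla_{t,x}u|^2+\lambda^2u^2\lesssim|\nabla_{t,x}u+\lambda u\nabla_{t,x}\varphi|^2+\lambda^2\mathcal L_\varphi\varphi\,u^2$ holds with a constant depending only on $c$ and $\sup_{Q_T}|\nabla_{t,x}\varphi|$. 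This mechanism is exactly what produces the clean $\lambda^2u^2$ weight on the left of \eqref{eq:h0u-PR25}; the rest is the bookkeeping of $\lambda$-powers and the choice of $\lambda_0$ ensuring the absorbed lower-order volume terms do not overwhelm the leading ones.
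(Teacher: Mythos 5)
Your proposal has a genuine gap at exactly the point you identify as ``the heart of the argument.'' After integrating the zeroth-order term by parts in $t$, your left-hand slice is
\begin{equation*}
\frac 1 2 \int_{\Omega_0} e^{2\lambda \varphi} \Big( |\nabla_{t,x} u + \lambda u \nabla_{t,x} \varphi|^2 + \lambda^2 \, \mathcal L_\varphi \varphi \, u^2 \Big), \qquad \mathcal L_\varphi \varphi = \varphi_t^2 - |\nabla \varphi|^2,
\end{equation*}
and your conclusion rests on the hypothesis $\mathcal L_\varphi \varphi \geq c > 0$ on $\Omega_0$, i.e.\ that $\nabla_{t,x}\varphi$ is timelike on the initial slice. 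But the proposition assumes only $\varphi \in C^2(\overline Q_T)$, and --- fatally for the downstream application --- the weight actually fed into this estimate in Propositions \ref{prop:InCa-PR25} and \ref{prop:enCr-PR25} is $\varphi(t,x) = \frac 1 2 |x-\eta|^2 - \frac \beta 2 t^2$, for which $\varphi_t(0,\cdot) = 0$ and $\inf_\Omega |\nabla \varphi| \geq 1$, so $\mathcal L_\varphi \varphi \leq -1$ on $\Omega_0$: the strict \emph{opposite} sign. Without your positivity assumption the slice form is genuinely indefinite (take $u_t(0,\cdot) = 0$ and $u(0,\cdot) = e^{-\lambda \varphi(0,\cdot)} w$ with $w$ slowly varying, so the first square nearly vanishes while the second term is negative of size $\lambda^2$), so the completing-the-square step cannot produce the claimed lower bound $C\int_{\Omega_0} e^{2\lambda\varphi}(|\nabla_{t,x}u|^2 + \lambda^2 u^2)$, and no choice of $\lambda_0$ repairs it.

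The paper's proof avoids the sign of $\mathcal L_\varphi \varphi$ entirely, and this is the idea your proposal is missing: one adds $M\lambda^2 v$ to the conjugated equation, multiplies $(\square + M\lambda^2) v$ by $v_t$, and works with the augmented energy $e(t) = \frac 1 2 \int_{\Omega_t} (|\nabla_{t,x} v|^2 + M \lambda^2 v^2)$. With $M = \sup_{\Omega_0} |\nabla_{t,x}\varphi|^2 + 1$, the positive $M\lambda^2 v^2$ in $e(0)$ absorbs the negative cross term in the expansion $|\nabla_{t,x} v|^2 \geq e^{2\lambda\varphi}\big(\frac 1 2 |\nabla_{t,x} u|^2 - \lambda^2 |\nabla_{t,x}\varphi|^2 u^2\big)$, which is what produces the $\lambda^2 u^2$ weight on the left unconditionally; all zeroth-order volume terms (including $M\lambda^2 v v_t$ and the $\square\varphi$ term) are simply estimated by Cauchy--Schwarz and dumped into the $C_\varphi(T)(\lambda |\nabla_{t,x} v|^2 + \lambda^3 v^2)$ bulk on the right, rather than integrated by parts in $t$. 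A secondary point: your plan to treat $\lambda \int_{Q_T} v_t (\square \varphi) v$ ``the same way'' (by parts in $t$) would require $\partial_t \square \varphi$, i.e.\ $\varphi \in C^3$, exceeding the stated $C^2$ regularity; Cauchy--Schwarz sidesteps this as well. The rest of your bookkeeping (the energy identity, the treatment of the source and first-order terms, and the expansions producing the $\Omega_T$ and $Q_T$ contributions) matches the paper and is fine.
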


\begin{proof}
	Set $v = e^{\lambda \varphi} u$ and add a constant $M$ to \eqref{eq:ape2-PR25} to counter the possibly negative effect of $-\mathcal L_\varphi \varphi$,
	\begin{equation*}
		(\square + M \lambda^2) v
		= e^{\lambda \varphi} \square u + 2\lambda \mathcal L_\varphi v + \lambda^2 (M - \mathcal L_\varphi \varphi + \lambda^{-1} \square \varphi) v.
	\end{equation*}
	Denote
	\[
	e(t) := \frac 1 2 \int_{\Omega_t} (|\nabla_{t,x} v|^2 + M\lambda^2 v^2) \dif x.
	\]
	Multiplying both sides by $v_t$ gives
	\begin{align}
		& \divr_{t,x} \big( \frac 1 2 (|\nabla_{t,x} v|^2 + M\lambda^2 v^2), -v_t \nabla v \big) \nonumber \\
		= & \ e^{\lambda \varphi} (\square u) v_t + 2\lambda (\mathcal L_\varphi v) v_t + \lambda^2 (M - \mathcal L_\varphi \varphi + \lambda^{-1} \square \varphi) v v_t. \label{eq:aL3-PR25}
	\end{align}
	Integrating \eqref{eq:aL3-PR25}, and substituting $v = e^{\lambda \varphi} u$, we have
	\begin{align}
		e(0)
		& = e(T) - \int_{\Sigma_T} v_t v_\nu - \int_{Q_T} [e^{\lambda \varphi} (\square u) v_t + 2\lambda (\mathcal L_\varphi v) v_t + \lambda^2 (M - \mathcal L_\varphi \varphi + \lambda^{-1} \square \varphi) v v_t] \nonumber \\
		& \leq e(T) + \int_{\Sigma_T} |v_t v_\nu| + C\int_{Q_T} \big[ e^{2\lambda \varphi} |\square u|^2 + (\mathcal C_\varphi(T) + M) (\lambda |\nabla_{t,x} v|^2 + \lambda^3 v^2) \big]. \label{eq:aL6-PR25}
	\end{align}
	By the definition of $f(t)$, we have
	\begin{align*}
		e(0)
		& \geq \frac 1 2 \int_{\Omega_0} e^{2\lambda \varphi} \big[ \frac 1 2 |\nabla_{t,x} u|^2 + \lambda^2 (M - |\nabla_{t,x} \varphi|^2) u^2 \big].
	\end{align*}
	Taking
	\(
	M = \sup_{\Omega_0} |\nabla_{t,x} \varphi|^2 + 1,
	\)
	from \eqref{eq:aL6-PR25} we obtain
	\begin{align*}
		& C \int_{\Omega_0} e^{2\lambda \varphi} (|\nabla_{t,x} u|^2 + \lambda^2 u^2) \nonumber \\
		\leq & \ e(0) \leq e(T) + \int_{\Sigma_T} |v_t v_\nu| + C \mathcal C_\varphi(T) \int_{Q_T} \big[ e^{2\lambda \varphi} |\square u|^2 + (\lambda |\nabla_{t,x} v|^2 + \lambda^3 v^2) \big], 
	\end{align*}
	and
	\[
	e(T) \leq C \int_{\Omega_0} e^{2\lambda \varphi} (|\nabla_{t,x} u|^2 + \mathcal C_\varphi(T) \lambda^2 u^2),
	\]
	for certain universal constant $C$.
	Combining these two gives \eqref{eq:h0u-PR25}.
	The proof is done.
\end{proof}

\subsection{Carleman estimate with boundary terms} \label{subsec:Carl-PR25}

We shall use the following lemma in the proof of a pointwise Carleman estimate.

\begin{lem} \label{lem:ajg-PR25}
	For any real numbers $a_1 $, $\cdots$, $a_n$ and $b$, and any integers $j_0, k_0 \in \{1,\cdots,n\}$, there holds
	\[
	\big( \sum_{j=1}^n a_j \big)^2
	\geq 2 \sum_{1 \leq j < k \leq n} a_j a_k + 2(a_{j_0} - a_{k_0}) b - b^2.
	\]
\end{lem}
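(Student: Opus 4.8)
The plan is to remove the cross-product sum on the right-hand side by the elementary square-of-a-sum identity, reducing the claim to a single quadratic estimate in $b$. Since
\[
\Big(\sum_{j=1}^n a_j\Big)^2 = \sum_{j=1}^n a_j^2 + 2\sum_{1\le j<k\le n} a_j a_k,
\]
the term $2\sum_{j<k}a_j a_k$ cancels against the corresponding term on the right, and the inequality becomes equivalent to the purely quadratic statement
\[
\sum_{j=1}^n a_j^2 + b^2 - 2(a_{j_0}-a_{k_0})\,b \;\ge\; 0 .
\]
First I would record this reduction, since it isolates the only nontrivial mechanism in the lemma: a single cross term $-2(a_{j_0}-a_{k_0})b$ that must be absorbed by the diagonal squares.

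Next I would complete the square in $b$, viewing the left side as a quadratic in $b$ with positive leading coefficient. Using
\[
b^2 - 2(a_{j_0}-a_{k_0})b = \big(b-(a_{j_0}-a_{k_0})\big)^2 - (a_{j_0}-a_{k_0})^2,
\]
the task becomes showing that $\sum_j a_j^2$ controls $(a_{j_0}-a_{k_0})^2$ modulo the nonnegative completed square. The elementary input here is the pairing bound $(a_{j_0}-a_{k_0})^2 \le 2(a_{j_0}^2+a_{k_0}^2)$, i.e.\ $2xy\le x^2+y^2$, which ties the offending cross term back to the squares $a_{j_0}^2,a_{k_0}^2$ that are already present in $\sum_j a_j^2$. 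When $j_0=k_0$ the cross term vanishes and the inequality is immediate, so the whole content sits in the case $j_0\ne k_0$, where both $a_{j_0}^2$ and $a_{k_0}^2$ genuinely occur in the sum and the remaining squares $\sum_{j\ne j_0,k_0}a_j^2\ge0$ may be discarded.

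The step I expect to be the main obstacle is the \emph{bookkeeping of constants}, namely making the completed square and the pairing inequality close without leaving a stray factor: the naive pairing bound yields $2(a_{j_0}^2+a_{k_0}^2)$, so a careless estimate overshoots $\sum_j a_j^2$ by a factor of two. The resolution is to perform the completion of the square in $b$ and the application of $2xy\le x^2+y^2$ \emph{simultaneously}, choosing the split of the $b^2$ weight so that the surplus from one exactly compensates the deficit of the other. If the constants are matched correctly, the entire expression collapses to a manifestly nonnegative perfect square of the form $\tfrac12(a_{j_0}+a_{k_0})^2\ge0$ together with discarded nonnegative remainders, and no further estimation is required. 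I would therefore devote the care to tracking these coefficients, as the algebra is otherwise routine.
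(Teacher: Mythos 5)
Your opening reduction is faithful to the statement: after cancelling $2\sum_{1\le j<k\le n}a_ja_k$, the printed lemma is equivalent to $\sum_{j=1}^n a_j^2 + b^2 - 2(a_{j_0}-a_{k_0})b \ge 0$. The gap is precisely in the step you defer to ``bookkeeping of constants'': the simultaneous completion of squares that is supposed to collapse everything into $\tfrac12(a_{j_0}+a_{k_0})^2$ plus nonnegative remainders does not exist, because the reduced inequality is false. Take $n=2$, $a_1=1$, $a_2=-1$, $j_0=1$, $k_0=2$, $b=2$: then $\sum_j a_j^2 + b^2 - 2(a_1-a_2)b = 1+1+4-8 = -2 < 0$ (in the original formulation, the left-hand side equals $0$ while the right-hand side equals $2$). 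Structurally, writing $x=a_{j_0}$, $y=a_{k_0}$, one has the exact identity $x^2+y^2+b^2-2(x-y)b = (x-b)^2+(y+b)^2-b^2$, and the deficit $-b^2$ is irreducible: the symmetric matrix of the quadratic form $x^2+y^2+b^2-2xb+2yb$ has determinant $-1$, so the form is not positive semidefinite. No split of the $b^2$ weight between the two squares can repair this; the obstacle you anticipated is not one of constants but of the statement itself, and carrying your own completion of squares to the end would have revealed it.

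What is true --- and what the paper's one-line substitution proof actually establishes --- is the inequality with $-2b^2$ in place of $-b^2$. Replacing $a_{j_0}$ by $a_{j_0}-b$ and $a_{k_0}$ by $a_{k_0}+b$ (for $j_0\ne k_0$) leaves $\sum_j a_j$ unchanged, and applying the trivial case $\sum_j a_j^2 \ge 0$ to the shifted variables gives
\[
\Big(\sum_{j=1}^n a_j\Big)^2 \ \ge\ 2\sum_{1\le j<k\le n} a_ja_k + 2(a_{j_0}-a_{k_0})b - 2b^2,
\]
which is exactly your completed-square identity $\sum_{j\ne j_0,k_0}a_j^2+(a_{j_0}-b)^2+(a_{k_0}+b)^2\ge 0$ rearranged; the case $j_0=k_0$ is trivial. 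So the printed coefficient $-b^2$ should read $-2b^2$. This weaker form is all the paper needs: in \eqref{eq:II13-PR25} the lemma is invoked with $b=\tilde \ell v$, and the only effect of the correction is to double the $\tilde \ell^2 v^2$ term in \eqref{eq:ePu2-PR25}, which in the applications is of size $\mathcal O(\lambda^2 v^2)$ (there $\tilde \ell = 3\beta\lambda$, or $\tilde\ell=0$) and is already absorbed by the $\mathcal O(|\nabla_{t,x} v|^2 + \lambda^2 v^2)$ error in \eqref{eq:eP1-PR25}, so Proposition \ref{prop:pwCa-PR25} and everything downstream survive with the corrected constant.
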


\begin{proof}
	When $b = 0$, it is trivial.
	When $b \neq 0$, changing $a_{j_0}$ to $a_{j_0} - b$ and $a_{k_0}$ to $a_{k_0} + b$ gives the result.
\end{proof}

A common way to prove Carleman estimates is to use G\"arding's inequality and positivity of the commutator.
But these methods usually cannot yield boundary terms in explicit form.
When the function is not compactly supported, direct computations are more feasible to obtain explicit expressions of boundary terms.
The following claim shows a pointwise Carleman estimate with explicit boundary terms.
We adopt the Einstein summation convention for repeated indices.

\begin{prop}[Pointwise Carleman estimate] \label{prop:pwCa-PR25}
	Let $a^{jk}(x)$ be a real symmetric matrix-valued, $C^2$-smooth function.
	Let $u$ and $\tilde \ell$ be $C^2$-smooth, and $\ell$ be $C^3$-smooth.
	Then for $v = e^\ell u$ there holds,
	\begin{align}
		e^{2\ell} |a^{jk} \partial_{jk} u|^2
		& \geq 2 \big[ 2 (a^{jk'} a^{j'k} \ell_{j'})_{k'} - (a^{jk} a^{j'k'})_{k'} \ell_{j'} - \tilde \ell a^{jk} \big] v_j v_k + 2 (a^{jk} a^{j'k'} \ell_{j'k'} - a^{jk} \tilde \ell)_j v v_k \nonumber \\
		& + \big\{ 2 a^{jk} a^{j'k'} (\ell_{j'} \ell_{k'})_j \ell_k + 2(a^{jk} a^{j'k'})_j \ell_{j'} \ell_{k'} \ell_k + 2a^{jk} \ell_j \ell_k \tilde \ell \big\} v^2 \nonumber \\
		& - \big\{ 2 a^{jk} a^{j'k'} (\ell_k \ell_{j'k'})_j + 2(a^{jk} a^{j'k'})_j \ell_{j'k'} \ell_k + \tilde \ell^2 - 2a^{jk} \ell_{jk} \tilde \ell \big\} v^2 + \partial_j F^j, \label{eq:ePu2-PR25}
	\end{align}
	where
	\begin{equation} \label{eq:eP2F-PR25}
		F^j := a^{jk} a^{j'k'} (2 \ell_k v_{j'} v_{k'} - 4 \ell_{j'} v_{k} v_{k'} - 2\ell_{j'k'} v_k v - 2 \ell_{j'} \ell_{k'} \ell_k v^2 + 2\ell_k \ell_{j'k'} v^2) + 2\tilde \ell a^{jk} v_k v.
	\end{equation}
\end{prop}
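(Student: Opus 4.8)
The plan is to turn the entire statement into a pointwise inequality for the conjugated operator and then settle it with a short algebraic step (Lemma \ref{lem:ajg-PR25}) followed by the product rule, avoiding the repeated integrations by parts that the classical proofs need. First I would conjugate: writing $u=e^{-\ell}v$ and using
\[
\partial_{jk}u = e^{-\ell}\bigl(v_{jk}-\ell_k v_j-\ell_j v_k-\ell_{jk}v+\ell_j\ell_k v\bigr),
\]
the symmetry $a^{jk}=a^{kj}$ collapses the two cross terms and gives
\[
e^{\ell} a^{jk}\partial_{jk}u = a^{jk}v_{jk}-2a^{jk}\ell_j v_k+a^{jk}(\ell_j\ell_k-\ell_{jk})v =: \mathcal P v .
\]
Thus $e^{2\ell}|a^{jk}\partial_{jk}u|^2=|\mathcal P v|^2$, and everything reduces to bounding $|\mathcal P v|^2$ from below pointwise.

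Next I would split $\mathcal P v=Sv+Av$ into a formally self-adjoint second-order piece and a skew first-order piece, using the free function $\tilde\ell$ to distribute the zeroth-order term:
\[
Sv:=a^{jk}v_{jk}+(a^{jk}\ell_j\ell_k-\tilde\ell)v,\qquad Av:=-2a^{jk}\ell_j v_k+(\tilde\ell-a^{jk}\ell_{jk})v,
\]
and expand $|\mathcal P v|^2=|Sv|^2+|Av|^2+2\,Sv\,Av$. The point of carrying $\tilde\ell$ is bookkeeping: it decides which zeroth-order contributions land in the ``good'' $v^2$-coefficient (third line of \eqref{eq:ePu2-PR25}) and which in the lower-order ``bad'' one (fourth line). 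I would discard $|Av|^2\ge 0$ and treat $|Sv|^2$ with Lemma \ref{lem:ajg-PR25}: the principal square $(a^{jk}v_{jk})^2$ is a $(\sum\text{terms})^2$, and the lemma lets me drop the non-negative sum of squares of second derivatives while keeping the pairwise products $2\sum_{j<k}(\cdots)$ and one linear-in-$b$ term; choosing $b$ to match the first-order part of $Av$ produces exactly the mixed second-order$\times$first-order contribution that feeds the gradient quadratic form, and the leftover $-b^2$ is absorbed into the $v^2$ terms.

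The remaining computation is purely the product rule, with each mixed product rewritten as a divergence plus a strictly lower-order term: $v_{jk}v_{k'}=\partial_j(v_kv_{k'})-(\cdots)$ and $v_{jk}v=\partial_j(v_kv)-v_jv_k$ feed the coefficient of $v_jv_k$; moving derivatives onto the coefficients $a^{jk}a^{j'k'}$ produces the $vv_k$ and $v^2$ coefficients; and $vv_k=\tfrac12\partial_k(v^2)$ converts the last first-order$\times$zeroth products into $v^2$ terms. Collecting the fluxes yields $F^j$ as in \eqref{eq:eP2F-PR25}, and collecting the interior terms yields the four brackets of \eqref{eq:ePu2-PR25}; here the symmetry of $a^{jk}$ is used repeatedly to merge index-permuted copies, e.g. to cast $(a^{jk}a^{j'k'}\ell_{j'})_k\,v_jv_{k'}$ into the symmetric shape $2(a^{jk'}a^{j'k}\ell_{j'})_{k'}v_jv_k$ displayed in the statement.

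The main obstacle I anticipate is the anisotropic second-order bookkeeping: for non-diagonal $a^{jk}$ the Hessian products $a^{jk}a^{j'k'}\ell_{j'}v_{jk}v_{k'}$ do not cancel after a single integration by parts, and the classical isotropic arguments close them only after several. Invoking Lemma \ref{lem:ajg-PR25} is precisely what short-circuits this step: it removes the sum of squares of second derivatives in one algebraic stroke while retaining the single cross term needed for pseudoconvexity, so that no second-order quantity survives in \eqref{eq:ePu2-PR25}. A secondary, purely organizational difficulty is choosing the placement of $\tilde\ell$ and the indices $j_0,k_0$ in the lemma so that the surviving interior terms assemble into the stated ``good minus bad'' decomposition rather than an unstructured remainder.
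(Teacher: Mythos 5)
Your overall architecture (conjugating to get $e^{\ell}a^{jk}\partial_{jk}u = a^{jk}v_{jk}-2a^{jk}\ell_jv_k+a^{jk}(\ell_j\ell_k-\ell_{jk})v$, using $\tilde\ell$ as a shift between the second- and zeroth-order pieces, then reducing everything by the product rule to divergences) is the paper's, but two of your concrete steps do not yield the stated inequality. First, your use of Lemma \ref{lem:ajg-PR25} is misplaced: you apply it \emph{inside} the principal square $(a^{jk}v_{jk})^2$, expanding over index pairs and choosing $b$ to match the first-order part of $Av$. That leaves on the right the sign-indefinite pairwise products $a^{jk}a^{j'k'}v_{jk}v_{j'k'}$ of \emph{distinct} second derivatives; no second-order quantity survives in \eqref{eq:ePu2-PR25}, these products cannot simply be dropped, and handling them would require divergence identities the paper never needs. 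Moreover the mixed second$\times$first contribution that feeds the gradient form already arises as an ordinary cross term, so no lemma is needed to produce it. In the paper, writing $I_2:=a^{jk}(v_{jk}+\ell_j\ell_kv)$, $I_1:=-2a^{jk}\ell_jv_k$, $I_0:=-a^{jk}\ell_{jk}v$, the whole square $I_2^2$ is discarded as nonnegative, and the lemma is invoked exactly once on the three scalars $I_2,I_1,I_0$ with $b=\tilde\ell v$, $j_0$ on $I_2$ and $k_0$ on $I_0$, giving the kept quantity $2I_2I_1+2I_2I_0+2I_1I_0+2(I_2-I_0)\tilde\ell v-\tilde\ell^2v^2$; the lemma's sole purpose is to inject the term $-2\tilde\ell a^{jk}v_jv_k$ (the positivity transfer) at the cost of a single $-\tilde\ell^2v^2$.

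Second, discarding $|Av|^2\ge 0$ wholesale loses terms that the statement contains. With your split, $Sv=I_2-\tilde\ell v$ and $Av=I_1+I_0+\tilde\ell v$, so $|Av|^2$ carries the cross term $2I_1(I_0+\tilde\ell v)$; in particular $2I_1I_0=2a^{jk}a^{j'k'}\ell_k\ell_{j'k'}(v^2)_j$ is precisely the source of the fourth-bracket terms $2a^{jk}a^{j'k'}(\ell_k\ell_{j'k'})_jv^2+2(a^{jk}a^{j'k'})_j\ell_{j'k'}\ell_kv^2$ in \eqref{eq:ePu2-PR25} and of the flux piece $2\ell_k\ell_{j'k'}v^2$ in \eqref{eq:eP2F-PR25}. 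Indeed your kept quantity $2\,Sv\,Av$ equals the paper's kept quantity minus $2I_1(I_0+\tilde\ell v)+\tilde\ell^2v^2$, which is sign-indefinite, so your inequality neither coincides with nor implies the stated one; even if you patch by retaining $2I_1(I_0+\tilde\ell v)$, you end with $-2\tilde\ell^2v^2$ in place of the stated $-\tilde\ell^2v^2$. The repair is to abandon the $S/A$ decomposition: drop only the three perfect squares $I_2^2$, $I_1^2$, $I_0^2$, keep all three pairwise products, and introduce $\tilde\ell$ through Lemma \ref{lem:ajg-PR25} as above; your product-rule computations then do assemble exactly into \eqref{eq:ePu2-PR25}--\eqref{eq:eP2F-PR25}.
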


\begin{proof}
	We have
	\begin{equation*}
		e^{\ell} a^{jk} u_{jk}
		= e^{\ell} a^{jk} (e^{-\ell} v)_{jk}
		= a^{jk} (v_{jk} + \ell_j \ell_k v) - 2 a^{jk} \ell_j v_k - a^{jk} \ell_{jk} v
		=: I_2 + I_1 + I_0,
	\end{equation*}
	so
	\begin{equation} \label{eq:I123-PR25}
		e^{2\ell} |a^{jk}(x) u_{jk}|^2
		\geq 2I_2 I_1 + 2I_2 I_0 + 2I_1 I_0.
	\end{equation}
	We analyze the terms in \eqref{eq:I123-PR25} on-by-one.
	For $2 I_2 I_1$,
	\begin{equation*}
		2 I_2 I_1
		= -4 (a^{jk} v_{jk} + a^{jk} \ell_j \ell_k v) (a^{j'k'} \ell_{j'} v_{k'})
		=: -4(I_{21}^1 + I_{21}^2).
	\end{equation*}
	We have
	\begin{align*}
		I_{21}^1
		& = a^{jk} a^{j'k'} \ell_{j'} v_{jk} v_{k'}
		= \big( a^{jk} a^{j'k'} \ell_{j'} v_{k} v_{k'} \big)_j - \big( a^{jk} a^{j'k'} \ell_{j'} v_{k'} \big)_j v_k \\
		& = \big( a^{jk} a^{j'k'} \ell_{j'} v_{k} v_{k'} \big)_j - \big( a^{jk} a^{j'k'} \ell_{j'} \big)_j v_{k'} v_k - a^{jk} a^{j'k'} \ell_{j'} v_{jk'} v_k.
	\end{align*}
	The symmetry of $a^{jk}$ guarantees that the last term satisfies
	\begin{align*}
		a^{jk} a^{j'k'} \ell_{j'} v_{jk'} v_k
		& = \frac 1 2 a^{jk} a^{j'k'} \ell_{j'} (v_{jk'} v_k + v_{kk'} v_j)
		= \frac 1 2 a^{jk} a^{j'k'} \ell_{j'} \big( v_j v_k \big)_{k'} \\
		& = \frac 1 2 \big( a^{jk} a^{j'k'} \ell_{j'} v_j v_k \big)_{k'} - \frac 1 2 \big( a^{jk} a^{j'k'} \ell_{j'} \big)_{k'} v_j v_k,
	\end{align*}
	so
	\begin{equation*}
		I_{21}^1
		= \big( a^{jk} a^{j'k'} (\ell_{j'} v_{k} v_{k'} - \frac 1 2 \ell_k v_{j'} v_{k'}) \big)_j - \big( a^{jk} a^{j'k'} \ell_{j'} \big)_j v_{k'} v_k + \frac 1 2 \big( a^{jk} a^{j'k'} \ell_{j'} \big)_{k'} v_j v_k.
	\end{equation*}
	Moreover,
	\begin{align*}
		I_{21}^2
		& = (a^{jk} \ell_j \ell_k v) (a^{j'k'} \ell_{j'} v_{k'})
		= \frac 1 2 a^{jk} a^{j'k'} \ell_j \ell_k \ell_{j'} \big( v^2 \big)_{k'}
		= \frac 1 2 a^{j'k'} a^{jk} \ell_{j'} \ell_{k'} \ell_k \big( v^2 \big)_j \\
		& = \frac 1 2 \big( a^{jk} a^{j'k'} \ell_{j'} \ell_{k'} \ell_k v^2 \big)_j - \frac 1 2 \big( a^{jk} a^{j'k'} \ell_{j'} \ell_{k'} \ell_k \big)_j v^2.
	\end{align*}
	Substituting $I_{21}^1$ and $I_{21}^2$ into $I_2 I_1$ gives
	\begin{align}
		2 I_2 I_1
		& = 4 \big( a^{jk} a^{j'k'} \ell_{j'} \big)_j v_{k'} v_k - 2 \big( a^{jk} a^{j'k'} \ell_{j'} \big)_{k'} v_j v_k + 2 (a^{jk} a^{j'k'} \ell_{j'} \ell_{k'} \ell_k) v^2 \nonumber \\
		& \quad + \big( a^{jk} a^{j'k'} (2 \ell_k v_{j'} v_{k'} - 4 \ell_{j'} v_{k} v_{k'} - 2 \ell_{j'} \ell_{k'} \ell_k v^2) \big)_j. \label{eq:I12-PR25}
	\end{align}
	For $2 I_2 I_0$,
	\begin{align}
		2 I_2 I_0
		& = -2 a^{jk} (v_{jk} + \ell_j \ell_k v) (a^{j'k'} \ell_{j'k'} v)
		= -2 a^{jk} a^{j'k'} \ell_j \ell_k \ell_{j'k'} v^2 - 2 a^{jk} a^{j'k'} \ell_{j'k'} v_{jk} v \nonumber \\
		& = -2 a^{jk} a^{j'k'} \ell_j \ell_k \ell_{j'k'} v^2 + 2 \big( a^{jk} a^{j'k'} \ell_{j'k'} \big)_j v v_k + 2 a^{jk} a^{j'k'} \ell_{j'k'} v_j v_k \nonumber \\
		& \quad - 2 \big( a^{jk} a^{j'k'} \ell_{j'k'} v_k v \big)_j. \label{eq:I13-PR25}
	\end{align}
	For $2 I_1 I_0$,
	\begin{equation}
		2 I_1 I_0
		= 2 a^{jk} a^{j'k'} \ell_k \ell_{j'k'} \big( v^2 \big)_j
		= - 2 \big( a^{jk} a^{j'k'} \ell_k \ell_{j'k'} \big)_j v^2 + 2 \big( a^{jk} a^{j'k'} \ell_k \ell_{j'k'} v^2 \big)_j. \label{eq:I23-PR25}
	\end{equation}
	
	Later in \eqref{eq:ePv3-PR25} we shall see that the coefficient of $v_t^2$ will be negative if we do not utilize $\tilde \ell$.
	To compensate for this negativity, we \textit{borrow} some positivity from the spatial derivatives through $\tilde \ell$.
	Specifically, we use Lemma \ref{lem:ajg-PR25} to add two more terms to \eqref{eq:I123-PR25},
	\begin{equation} \label{eq:II13-PR25}
		e^{2\lambda \varphi} |Pu|^2
		\geq 2I_2 I_1 + 2I_2 I_0 + 2I_1 I_0 + 2(I_2 - I_0) \tilde \ell v - \tilde \ell^2 v^2.
	\end{equation}
	For $2I_2 \tilde \ell v$, direct computation gives
	\begin{equation} \label{eq:I1l-PR25}
		2I_2 \tilde \ell v
		= \big( 2\tilde \ell a^{jk} v_k v \big)_j - 2\tilde \ell a^{jk} v_j v_k - 2 \big( a^{jk} \tilde \ell \big)_j v v_k + 2a^{jk} \ell_j \ell_k \tilde \ell v^2.
	\end{equation}
	It is for the term $-2\tilde \ell a^{jk} v_j v_k$ that we need in order to introduce $\tilde \ell v$.
	Combining \eqref{eq:II13-PR25}, \eqref{eq:I12-PR25}, \eqref{eq:I13-PR25}, \eqref{eq:I23-PR25} and \eqref{eq:I1l-PR25}, we arrive at the result.
\end{proof}

\begin{rem}
	In contrast to the standard approach of decomposing the conjugated operator $e^{\ell} a^{jk} \partial_{jk} e^{-\ell}$ into self-adjoint and anti-self-adjoint parts, our proof proceeds by a direct computation of key terms.
	The central idea is to introduce an auxiliary function $\tilde \ell$, which allows us to effectively transfer part of the positivity from the spatial derivatives to compensate for the negativity arising from the time derivative.
\end{rem}

By suitably choosing $a^{jk}$, $\varphi$ and $\tilde \varphi$, and integrating the inequality, from Proposition \ref{prop:pwCa-PR25} we obtain the following Carleman estimate for the wave operator.

\begin{prop}[Carleman estimate] \label{prop:InCa-PR25}
	Set the phase function $\varphi(t,x)$ as
	\[
	\varphi(t,x) := \frac 1 2 |x - \eta|^2 - \frac \beta 2 t^2,
	\]
	where $\eta \in \Rn \backslash \overline \Omega$ is chosen such that $\inf_\Omega |\nabla \varphi| \geq 1$.
	For any $u \in H^2(Q_T)$ and any $(\beta,T,\lambda)$ determined by the following restrictions:
	\begin{equation} \label{eq:bTl-PR25}
		\left\{\begin{aligned}
			T_0 & := 40 (\sup_\Omega |\nabla \varphi| + 1)^3, &
			T & \geq T_0, &
			\beta & := 2(\sup_\Omega |\nabla \varphi| + 1) T^{-1}, \\
			\lambda_0 & := \beta^{-1} + 1, &
			\lambda & \geq \lambda_0, &
			C_2 & := C_2(T_0, \lambda_0) > 0,
		\end{aligned}\right.
	\end{equation}
	there holds
	\begin{align}
		& C_2 \Big( \int_{Q_T} e^{2\lambda \varphi} (\beta \lambda |\nabla_{t,x} u|^2 + \lambda^3 u^2) + \int_{\Omega_T} e^{2\lambda \varphi} (\lambda |\nabla_{t,x} u|^2 + \lambda^3 u^2) + \int_{\Omega_0} e^{2\lambda \varphi} \mathcal B \Big) \nonumber \\
		\leq & \int_{Q_T} e^{2\lambda \varphi} |\square u|^2 + \int_{\Sigma_T} e^{2\lambda \varphi} (\lambda |\nabla_{t,x} u|^2 + \lambda^3 u^2), \label{eq:InCa-PR25}
	\end{align}
	with
	\[
	\mathcal B
	: = u_t [{-}4\lambda^2 |\nabla \varphi|^2 u - 4\lambda \nabla \varphi \cdot \nabla u - (2n + 4\beta) \lambda u]|_{t=0}.
	\]
\end{prop}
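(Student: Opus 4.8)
The plan is to read off Proposition~\ref{prop:pwCa-PR25} for the wave operator and then integrate. Writing the space--time variable as $(x_0,x_1,\dots,x_n)=(t,x)$, the operator $\square$ is $a^{jk}\partial_{jk}$ with the constant symmetric matrix $a^{jk}=\mathrm{diag}(1,-1,\dots,-1)$, and I would set $\ell=\lambda\varphi$ and let $\tilde\ell$ be an auxiliary multiple of $\lambda$ to be fixed. The constancy of $a^{jk}$ is what makes this tractable: every term in \eqref{eq:ePu2-PR25} and \eqref{eq:eP2F-PR25} that differentiates $a$ drops, and with $\varphi_t=-\beta t$, $\varphi_{tt}=-\beta$, $\nabla\varphi=x-\eta$, $\varphi_{x_ix_j}=\delta_{ij}$ the Hessian of $\varphi$ is the constant matrix $\mathrm{diag}(-\beta,1,\dots,1)$. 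Hence every coefficient in \eqref{eq:ePu2-PR25} collapses to an explicit polynomial in $\lambda$, $\beta$ and $\tilde\ell$.

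First I would extract the quadratic form in $\nabla_{t,x}v$. A direct contraction shows its coefficient is governed by $\mathrm{diag}(-\beta,1,\dots,1)$ together with the $-\tilde\ell a^{jk}$ term, so that the $v_t^2$ coefficient has the form $-4\beta\lambda-2\tilde\ell$ and the $|\nabla_x v|^2$ coefficient the form $4\lambda+2\tilde\ell$ (up to the chosen sign convention for $a$). This is exactly the place flagged in the remark after Proposition~\ref{prop:pwCa-PR25}: the raw time coefficient is of the wrong sign, and the role of $\tilde\ell$, justified by Lemma~\ref{lem:ajg-PR25}, is to transfer a controlled amount of positivity from the (abundant) spatial term to the time term. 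I would therefore fix $\tilde\ell$ as a suitable multiple of $\beta\lambda$ so that, after the transfer, both coefficients are bounded below by a multiple of $\beta\lambda$; the restrictions in \eqref{eq:bTl-PR25}, which force $\beta$ to be of size at most $1/20$, are precisely what guarantees the spatial reservoir is large enough for this to work. The $v^2$ coefficient is treated the same way: its leading part is $O(\lambda^3)$ and, using $\inf_\Omega|\nabla\varphi|\ge1$ together with the smallness of $\beta\varphi_t^2$ (again from $T\ge T_0$), it is bounded below by a positive multiple of $\lambda^3$, the $O(\lambda^2)$ remainder being absorbed once $\lambda\ge\lambda_0$. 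The mixed $v\,\nabla v$ term carries an $O(\lambda^2)$ coefficient and is dominated by the other two.

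Next I would integrate the pointwise inequality over $Q_T$ and apply the divergence theorem to $\partial_jF^j$, splitting $\partial Q_T$ into the bottom face $\{t=0\}\times\Omega$, the top face $\{t=T\}\times\Omega$ and the lateral face $\Sigma_T$. The decisive design feature is $\ell_0=\lambda\varphi_t=-\lambda\beta t$, which vanishes on $\{t=0\}$; this annihilates all the $\ell_0$-terms of $F^0$ there, and after substituting $v=e^{\lambda\varphi}u$ (so that $\partial_t v|_{t=0}=e^{\lambda\varphi}\partial_t u$ since $\varphi_t|_{t=0}=0$) the bottom-face contribution assembles into the explicit expression $\int_{\Omega_0}e^{2\lambda\varphi}\mathcal B$. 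This is the step that replaces the classical time-reflection trick. The lateral face produces $F^j\nu_j$ which, componentwise, is bounded by $e^{2\lambda\varphi}(\lambda|\nabla_{t,x}u|^2+\lambda^3u^2)$, giving the $\Sigma_T$ term, while the top-face contribution is controlled by the corresponding integral over $\Omega_T$. Placing these on the two sides as in \eqref{eq:InCa-PR25}, and converting the interior $v$-forms back to $u$ via $\nabla_{t,x}v=e^{\lambda\varphi}(\nabla_{t,x}u+\lambda u\nabla_{t,x}\varphi)$ — where the cross term is handled by Young's inequality and the resulting spurious $\lambda^3u^2$ is absorbed into the genuine $\lambda^3 v^2$ because $\beta|\nabla\varphi|^2$ is small — yields the stated estimate.

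The main obstacle is the constant bookkeeping and, above all, the sign management of the time-derivative term. One must choose $\tilde\ell$ so that the $v_t^2$ coefficient becomes usable without destroying the $|\nabla_x v|^2$ and $\lambda^3v^2$ lower bounds, and then verify that the precise thresholds in \eqref{eq:bTl-PR25} — in particular $T_0=40(\sup_\Omega|\nabla\varphi|+1)^3$, $\beta=2(\sup_\Omega|\nabla\varphi|+1)T^{-1}$ and $\lambda_0=\beta^{-1}+1$ — make every $\beta$-small and $\lambda$-subleading remainder absorbable while keeping the coefficients of $\beta\lambda|\nabla_{t,x}u|^2$ and $\lambda^3u^2$ strictly positive. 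The algebraic identity of Lemma~\ref{lem:ajg-PR25} is the conceptual key that legitimizes the whole maneuver, and tracking the exact boundary term $\mathcal B$ at $t=0$ through the substitution $v=e^{\lambda\varphi}u$ is the most delicate piece of the accounting.
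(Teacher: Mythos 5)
Your proposal is correct and follows essentially the same route as the paper's own proof: specialize Proposition \ref{prop:pwCa-PR25} to the constant Lorentzian diagonal $a^{jk}$ with $\ell=\lambda\varphi$, choose $\tilde\ell$ as a fixed multiple of $\beta\lambda$ (the paper takes $\tilde\varphi=3\beta$) to transfer positivity from the spatial to the time term via Lemma \ref{lem:ajg-PR25}, then integrate and split the flux $\partial_tF^0+\partial_jF^j$ over the three faces, with $\varphi_t|_{t=0}=0$ yielding exactly $\mathcal B=-F^0|_{t=0}$ on the bottom face, the lower bound on $F^0|_{t=T}$ supplying the $\Omega_T$ term on the left, and the lateral flux giving the $\Sigma_T$ term. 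The only caveat is that your sketch leaves the explicit constant verification (e.g.\ $4-2T_0T^{-1}\geq 2$ and the choice $\tilde\varphi=3\beta$) unexecuted, but the plan matches the paper step for step.
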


\begin{proof}
	We regard the variable $x$ in Proposition \ref{prop:pwCa-PR25} as $(t,x) \in \R^{n+1}$ where $t = x_0 \in \mathbb R$ and $x = (x_1,\cdots, x_n) \in \Rn$.
	Let $a^{jk}$, $\ell$ and $\tilde \ell$ be
	\[
	a^{jk}(t,x) =
	\begin{cases}
		-1, & j = k = 0, \\
		1, & 1 \leq j = k \leq n, \\
		0, & j \neq k,
	\end{cases}
	\quad
	\ell(t,x) := \lambda \varphi(t,x), \quad
	\tilde \ell(t,x) := \lambda \tilde \varphi,
	\]
	so that $a^{jk} \partial_{jk} = \Delta_x - \partial_t^2 = -\square$, and \eqref{eq:ePu2-PR25} and \eqref{eq:eP2F-PR25} can be simplified as
	\begin{align}
		e^{2\ell} |\square u|^2
		& \geq 4 \lambda \big( \sum_{j,k=1}^n \varphi_{jk} v_j v_k - 2 \sum_{j=1}^n \varphi_{tj} v_t v_j + \varphi_{tt} v_t^2 \big) + 2\lambda \tilde \varphi (v_t^2 - |\nabla v|^2) \nonumber \\
		& \quad + \lambda^3 \big[ (2 \nabla \varphi \cdot \nabla - 2\varphi_t \partial_t + 2\tilde \varphi) (|\nabla \varphi|^2 - \varphi_t^2) \big] v^2 + \mathcal O(|\nabla_{t,x} v|^2 + \lambda^2 v^2) \nonumber \\
		& \quad + \partial_t F^0 + \sum_{j=1}^n \partial_j F^j, \label{eq:eP1-PR25}
	\end{align}
	where $v = e^{\lambda \varphi} u$ and
	\begin{equation} \label{eq:eP2-PR25}
		\left\{\begin{aligned}
			F^0 & = -2\lambda \varphi_t (|\nabla v|^2 - v_t^2) + 4\lambda (\nabla \varphi \cdot \nabla v - \varphi_t v_t) v_t - 2\lambda (\square \varphi + \tilde \varphi) v_t v \\
			& \quad + 2\lambda^3 \varphi_t (|\nabla \varphi|^2 - \varphi_t^2) v^2 + 2\lambda^2 \varphi_t (\square \varphi) v^2, \\
			F^j & = 2 \lambda \varphi_j (|\nabla v|^2 - v_t^2) - 4 \lambda (\nabla \varphi \cdot \nabla v - \varphi_t v_t) v_j + 2\lambda (\square \varphi + \tilde \varphi) v_j v \\
			& \quad - 2\lambda^3 \varphi_j (|\nabla \varphi|^2 - \varphi_t^2) v^2 - 2\lambda^2 \varphi_j (\square \varphi) v^2. \qquad (1 \leq j \leq n)
		\end{aligned}\right.
	\end{equation}
	Denote $\phi(x) := \frac 1 2 |x - \eta|^2$, $\tau(t) = -\frac \beta 2 t^2$, then $\varphi(t,x) = \phi(x) + \tau(t)$.
	After specifying $\varphi$, \eqref{eq:ePu2-PR25} and \eqref{eq:eP2F-PR25} can be further simplified as
	\begin{align}
		e^{2\lambda \varphi} |\square u|^2
		\geq & \ (4 - 6\tilde \varphi) \lambda |\nabla v|^2 + (2\tilde \varphi - 4\beta) \lambda v_t^2 + \mathcal O(|\nabla_{t,x} v|^2 + \lambda^2 v^2) \nonumber \\
		& + [(4 + 2\tilde \varphi) |\nabla \phi|^2 - (4\beta + 2\tilde \varphi) \dot \tau^2] \lambda^3 v^2 + \partial_t F^0 + \sum_{j=1}^n \partial_j F^j, \label{eq:ePv3-PR25}
	\end{align}
	and
	\begin{equation} \label{eq:e22F-PR25}
		\left\{\begin{aligned}
			F^0 & = -2\lambda \dot \tau (|\nabla v|^2 - v_t^2) + 4\lambda (\nabla \phi \cdot \nabla v - \dot \tau v_t) v_t + 2\lambda^3 \dot \tau (|\nabla \phi|^2 - \dot \tau^2) v^2 \\
			& \quad - 2\lambda (\square \varphi + \tilde \varphi) v_t v + 2\lambda^2 \varphi_t (\square \varphi) v^2, \\
			& \geq (-\dot \tau - |\nabla \phi|) \big[ 2\lambda |\nabla_{t,x} v|^2 + 2\lambda^3 (-\dot \tau) (-\dot \tau + |\nabla \phi|) v^2 \big] + \mathcal O(|\nabla_{t,x} v|^2 + \lambda^2 v^2), \\
			F^j & = \mathcal O(\lambda |\nabla_{t,x} v|^2 + \lambda^3 v^2), \qquad 1 \leq j \leq n.
		\end{aligned}\right.
	\end{equation}
	
	Next, we take advantage of \eqref{eq:bTl-PR25} to further simplify \eqref{eq:ePv3-PR25} and \eqref{eq:e22F-PR25}.
	It is easy to see $\beta < 1/3$, so $4 - 6\beta > 2$.
	Set $\tilde \varphi = 3\beta$.
	For $(t,x) \in Q_T$, the coefficient in front of $\lambda^3 v^2$ satisfies
	\[
	4|\nabla \phi|^2 - 10\beta^3 t^2
	\geq 4 - 10\beta^3 T^2
	= 4 - 80(\sup_\Omega |\nabla \varphi| + 1)^3 T^{-1}
	= 4 - 2T_0 T^{-1}
	\geq 2.
	\]
	Moreover, we have $\dot \tau(0) = 0$ and $-\dot \tau(T) = \beta T = 2\sup_\Omega |\nabla \phi| + 2$.
	Hence, we can further simplify \eqref{eq:ePv3-PR25} and \eqref{eq:e22F-PR25} as 
	\begin{equation*}
		e^{2\lambda \varphi} |\square u|^2
		\geq \lambda (|\nabla v|^2 + \beta v_t^2) + \lambda^3 v^2 + \partial_t F^0 + \partial_j F^j,
	\end{equation*}
	and
	\begin{equation*}
		\left\{\begin{aligned}
			F^0|_{t=0} & = 4\lambda \nabla \phi \cdot \nabla v v_t + (2n + 4\beta) \lambda v_t v, \\
			F^0|_{t=T} & \geq C(\lambda |\nabla_{t,x} v|^2 + \lambda^3 v^2), \\
			|F^j| & \leq C (\lambda |\nabla_{t,x} v|^2 + \lambda^3 v^2), \qquad 1 \leq j \leq n.
		\end{aligned}\right.
	\end{equation*}
	We see that $F^0|_{t=T} \geq 0$, hence integrating the simplified inequality gives
	\begin{align*}
		& \int_{Q_T} (\beta \lambda |\nabla_{t,x} v|^2 + \lambda^3 v^2) + \int_{\Omega_T} (\lambda |\nabla_{t,x} v|^2 + \lambda^3 v^2) - \int_{\Omega_0} (4\lambda \nabla \phi \cdot \nabla v v_t + (2n + 4\beta) \lambda v_t v) \nonumber \\
		\leq & \int_{Q_T} e^{2\lambda \varphi} |\square u|^2 + C\int_{\Sigma_T} (\lambda |\nabla_{t,x} v|^2 + \lambda^3 v^2).
	\end{align*}
	Transforming $v$ back to $u$ through $u = e^{-\lambda \varphi} v$, and taking $\lambda$ to be large enough, we arrive at the claim.
	For $\mathcal B$, we investigate $F^0|_{t=0}$.
	Using $u = e^{-\lambda \varphi} v$ and $\dot \tau(0) = 0$ we have
	\[
	F^0|_{t=0} = 4\lambda^2 |\nabla \phi|^2 u_t u + 4\lambda \nabla \phi \cdot \nabla u u_t + (2n + 4\beta) \lambda u_t u,
	\]
	so $\mathcal B = -F^0|_{t=0}$.
	The proof is done.
\end{proof}

\subsection{Combination of the two estimates}

We first present a well-known Carleman estimate for elliptic operators.
For completeness, we provide a quick proof using Proposition \ref{prop:pwCa-PR25}.

\begin{lem}[Carleman estimate at $t = 0$] \label{lem:pwC2-PR25}
	Fix $x_0 \in \Rn \backslash \overline \Omega$ such that $\dist(x_0, \Omega) \geq 1$.
	There exist two positive constants $\lambda_0$ and $C = C(\lambda_0)$ such that for all $\lambda \geq \lambda_0$ and all $u \in H^2(\Omega)$, there holds
	\begin{equation} \label{eq:ePu3-PR25}
		C \int_\Omega e^{\lambda |x - x_0|^2} (\lambda |\nabla u|^2 + \lambda^3 u^2)
		\leq \int_\Omega e^{\lambda |x - x_0|^2} |\Delta u|^2 + \int_{\partial \Omega} e^{\lambda |x - x_0|^2} (\lambda |\nabla u|^2 + \lambda^3 u^2).
	\end{equation}
\end{lem}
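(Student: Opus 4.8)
The plan is to specialize the pointwise inequality \eqref{eq:ePu2-PR25} of Proposition \ref{prop:pwCa-PR25} to the purely spatial, isotropic situation. I would take $a^{jk} = \delta^{jk}$, so that $a^{jk}\partial_{jk} = \Delta$, choose the phase $\ell(x) := \tfrac{\lambda}{2}|x - x_0|^2$ (so that $e^{2\ell} = e^{\lambda|x-x_0|^2}$ is exactly the target weight), and---crucially---set the auxiliary function $\tilde\ell \equiv 0$. The role of $\tilde\ell$ in the hyperbolic estimate of Proposition \ref{prop:InCa-PR25} was to borrow positivity in order to cure the wrong-sign $v_t^2$ term coming from the time direction; here every coordinate direction is ``spatial'' and already carries positive weight, so no such compensation is needed and Lemma \ref{lem:ajg-PR25} is not invoked.

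With these choices every derivative of $a^{jk}$ vanishes and $\ell_{jk} = \lambda\delta_{jk}$, $\nabla\ell = \lambda(x-x_0)$, $|\nabla\ell|^2 = \lambda^2|x-x_0|^2$. Substituting into \eqref{eq:ePu2-PR25}, the coefficient of $v_j v_k$ collapses to $4\ell_{jk} = 4\lambda\delta_{jk}$; the mixed $v\,v_k$ term drops out because $a^{jk}a^{j'k'}\ell_{j'k'} = n\lambda\,\delta^{jk}$ is constant; and the two braces contributing to $v^2$ reduce to a leading term $4\lambda^3|x-x_0|^2$ together with a single lower-order contribution $-2n^2\lambda^2$. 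Thus with $v = e^\ell u$ the pointwise estimate becomes
\[
e^{\lambda|x-x_0|^2}|\Delta u|^2 \;\geq\; 4\lambda|\nabla v|^2 + \big(4\lambda^3|x-x_0|^2 - 2n^2\lambda^2\big)\, v^2 + \partial_j F^j,
\]
where $F^j$ is the specialization of \eqref{eq:eP2F-PR25} to $a^{jk}=\delta^{jk}$, $\tilde\ell=0$.

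I would then integrate over $\Omega$ and apply the divergence theorem to convert $\int_\Omega \partial_j F^j$ into the boundary flux $\int_{\partial\Omega} F^j \nu_j$. Each summand of $F^j$, after reverting to $u$, is controlled pointwise by $e^{\lambda|x-x_0|^2}(\lambda|\nabla u|^2 + \lambda^3 u^2)$, which furnishes precisely the boundary term on the right of \eqref{eq:ePu3-PR25}. The hypothesis $\dist(x_0,\Omega)\geq 1$ enters decisively here: it gives $|x-x_0|^2 \geq 1$ on $\overline\Omega$, so for $\lambda$ large enough (say $\lambda \geq 2n^2$) the lower-order $2n^2\lambda^2 v^2$ is absorbed, leaving an interior lower bound of the form $4\lambda|\nabla v|^2 + 3\lambda^3|x-x_0|^2 v^2$.

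Finally I would convert back from $v$ to $u$. Writing $e^{\ell}\nabla u = \nabla v - v\,\nabla\ell$ yields $e^{2\ell}|\nabla u|^2 \leq 2|\nabla v|^2 + 2\lambda^2|x-x_0|^2 v^2$, while $e^{2\ell}u^2 = v^2$ is exact; hence the target interior quantity satisfies $e^{\lambda|x-x_0|^2}(\lambda|\nabla u|^2 + \lambda^3 u^2) \leq 2\lambda|\nabla v|^2 + (2|x-x_0|^2+1)\lambda^3 v^2$. The main---though mild---obstacle is then the constant bookkeeping that matches this against what the pointwise estimate produces: one uses $|x-x_0|^2\geq 1$ once more to see that $3|x-x_0|^2 \geq 2|x-x_0|^2 + 1$, which exactly closes the gradient conversion and delivers \eqref{eq:ePu3-PR25}. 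A routine density argument extends the inequality from $C^2$ functions to all of $H^2(\Omega)$, completing the proof.
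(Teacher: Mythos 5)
Your proposal is correct and takes essentially the same route as the paper's own proof: both specialize Proposition \ref{prop:pwCa-PR25} with $a^{jk}=\delta^{jk}$, $\tilde\ell\equiv 0$ and the quadratic phase $\ell=\tfrac{\lambda}{2}|x-x_0|^2$, integrate the resulting pointwise inequality over $\Omega$ to turn $\partial_j F^j$ into the boundary term, use $\dist(x_0,\Omega)\geq 1$ to absorb the lower-order $\lambda^2 v^2$ contribution for large $\lambda$, convert $v$ back to $u$, and finish by density. The only difference is that you carry out explicitly the constant bookkeeping (the $-2n^2\lambda^2$ term and the gradient conversion) that the paper compresses into its $\mathcal{O}(|\nabla v|^2+\lambda^2 v^2)$ notation.
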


\begin{proof}
	In Proposition \ref{prop:pwCa-PR25}, by setting $a^{jk} = \delta^{jk}$, $\ell = \lambda \varphi$ and $\tilde \ell = 0$, we have
	\begin{equation*}
		e^{2\ell} |\Delta u|^2
		\geq 4 (\varphi_{jk}) \lambda v_j v_k + 2 \nabla \varphi \cdot \nabla (|\nabla \varphi|^2) \lambda^3 v^2 + \mathcal O(|\nabla v|^2 + \lambda^2 v^2) + \partial_j F^j.
	\end{equation*}
	Taking $\varphi = \frac 1 2 |x - x_0|^2$, we obtain
	\begin{align*}
		e^{\lambda |x - x_0|^2} |\Delta u|^2
		& \geq 4 \lambda |\nabla v|^2 + 4|x - x_0|^2 \lambda^3 v^2 + \mathcal O(|\nabla v|^2 + \lambda^2 v^2) + \partial_j F^j \\
		& \geq C(\lambda |\nabla v|^2 + \lambda^3 v^2) + \partial_j F^j.
	\end{align*}
	where the last inequality holds when $\lambda$ is large enough.
	Integrating the inequality in $\Omega$ gives
	\[
	C \int_\Omega (\lambda |\nabla v|^2 + \lambda^3 v^2)
	\leq \int_\Omega e^{2\lambda |x - x_0|^2} |\Delta u|^2 + \int_{\partial \Omega} \nu_j F^j.
	\]
	Finally, changing $v$ back to $u$ through $v = e^{-\lambda \varphi} u$ gives the result.
	The derivation originally works for $u \in C^2(\overline \Omega)$, but a density argument extend it to $H^2(\Omega)$.
\end{proof}

When combining the weighted energy estimate with the Carleman estimate, we can cancel the last two terms in \eqref{eq:h0u-PR25} and obtain the following result.

\begin{prop} \label{prop:enCr-PR25}
	Under the same condition as in Proposition \ref{prop:InCa-PR25}, and assume $q \in L^\infty(\Omega)$,
	then there exist positive constants $\lambda_0$ and $C = C(T,\lambda_0)$ such that for all $\lambda \geq \lambda_0$, there holds
	\begin{align}
		& \ C \Big( \int_{\Omega_0} e^{2\lambda \varphi} (|\nabla_{t,x} u|^2 + \lambda^2 u^2) + \int_{\Omega_0} e^{2\lambda \varphi} \mathcal B \Big) \nonumber \\
		\leq & \ \int_{Q_T} e^{2\lambda \varphi} |(\square + q) u|^2 + \int_{\Sigma_T} e^{2\lambda \varphi} (\lambda |\nabla_{t,x} u|^2 + \lambda^3 u^2), \label{eq:WeC1-PR25}
	\end{align}
	with
	\[
	\mathcal B
	: = u_t [{-}4\lambda^2 |\nabla \varphi|^2 u - 4\lambda \nabla \varphi \cdot \nabla u - (2n + 4\beta) \lambda u]|_{t=0}.
	\]
	Moreover, with the additional condition
	\begin{equation} \label{eq:WeC2-PR25}
		|\Delta u(0,x)| \leq C(\lambda^\alpha |\nabla u(0,x)| + \lambda^{\alpha'} |u(0,x)|), \quad \alpha \in (0,\frac 1 2), \ \ \alpha' \in (0,\frac 3 2), \ \ x \in \Omega,
	\end{equation}
	we have
	\begin{align}
		C \int_{\Omega_0} e^{2\lambda \varphi} (|\nabla_{t,x} u|^2 + \lambda^2 u^2)
		& \leq \int_{Q_T} e^{2\lambda \varphi} |(\square + q) u|^2 + \int_{\Sigma_T} e^{2\lambda \varphi} (\lambda |\nabla_{t,x} u|^2 + \lambda^3 u^2) \nonumber \\
		& \quad + \int_{\partial \Omega_0} e^{2\lambda \varphi} (\lambda^2 |\nabla u|^2 + \lambda^4 u^2). \label{eq:WeC3-PR25}
	\end{align}
\end{prop}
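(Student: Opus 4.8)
The plan is to derive \eqref{eq:WeC1-PR25} by combining the weighted energy estimate of Proposition \ref{prop:weg1-PR25} with the Carleman estimate of Proposition \ref{prop:InCa-PR25}, and then to derive \eqref{eq:WeC3-PR25} from \eqref{eq:WeC1-PR25} by absorbing the boundary functional $\mathcal B$ with the aid of the elliptic Carleman estimate of Lemma \ref{lem:pwC2-PR25}. For the first step, observe that the two ``bad'' terms on the right of \eqref{eq:h0u-PR25}, namely the interior term $C_\varphi(T)\int_{Q_T}e^{2\lambda\varphi}(\lambda|\nabla_{t,x}u|^2+C_\varphi(T)\lambda^3u^2)$ and the terminal term $\int_{\Omega_T}e^{2\lambda\varphi}(|\nabla_{t,x}u|^2+\mathcal C_\varphi(T)\lambda^2u^2)$, are exactly of the form that the left of \eqref{eq:InCa-PR25} bounds from below, via $\int_{Q_T}e^{2\lambda\varphi}(\beta\lambda|\nabla_{t,x}u|^2+\lambda^3u^2)$ and $\int_{\Omega_T}e^{2\lambda\varphi}(\lambda|\nabla_{t,x}u|^2+\lambda^3u^2)$. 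Since $\beta$, $C_\varphi(T)$ and $\mathcal C_\varphi(T)$ are fixed once $T$ is chosen, taking $\lambda$ large enough (so that $1\le\lambda$ and $\mathcal C_\varphi(T)\lambda^2\le\lambda^3$) makes each bad term a bounded multiple of the corresponding Carleman quantity, and I replace them using \eqref{eq:InCa-PR25}.

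Carrying this out, the interior source $\int_{Q_T}e^{2\lambda\varphi}|\square u|^2$ appears on both sides and is rewritten through $|\square u|^2\le 2|(\square+q)u|^2+2\norm{q}_{L^\infty(\Omega)}^2u^2$; the leftover $\int_{Q_T}e^{2\lambda\varphi}u^2$ is $O(\lambda^{-3})$ relative to the $\lambda^3u^2$ Carleman term and is reabsorbed for $\lambda$ large, which is the only place $q\in L^\infty(\Omega)$ is used. The lateral term $\int_{\Sigma_T}|\partial_t(e^{\lambda\varphi}u)\,\partial_\nu(e^{\lambda\varphi}u)|$ of \eqref{eq:h0u-PR25} is bounded, after expanding the derivatives of $e^{\lambda\varphi}u$, by $C\int_{\Sigma_T}e^{2\lambda\varphi}(|\nabla_{t,x}u|^2+\lambda^2u^2)\le C\int_{\Sigma_T}e^{2\lambda\varphi}(\lambda|\nabla_{t,x}u|^2+\lambda^3u^2)$ for $\lambda\ge1$, matching the lateral term of \eqref{eq:InCa-PR25}. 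Finally, $\mathcal B$ appears on the left of \eqref{eq:InCa-PR25}, so using that estimate to dominate the bad terms transfers $-\int_{\Omega_0}e^{2\lambda\varphi}\mathcal B$ to the right, i.e.\ $\mathcal B$ migrates to the left of the combined inequality; collecting the generic constants yields \eqref{eq:WeC1-PR25}.

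For \eqref{eq:WeC3-PR25} I move $\int_{\Omega_0}e^{2\lambda\varphi}\mathcal B$ to the right of \eqref{eq:WeC1-PR25} and estimate it from above. Writing $\mathcal B=u_t\,Lu|_{t=0}$ with $Lu=-4\lambda^2|\nabla\varphi|^2u-4\lambda\nabla\varphi\cdot\nabla u-(2n+4\beta)\lambda u$ and using $e^{2\lambda\varphi}|_{t=0}=e^{\lambda|x-\eta|^2}$, Young's inequality with a small constant $\delta$ gives $\int_{\Omega_0}e^{2\lambda\varphi}|u_t\,Lu|\le \delta\int_{\Omega_0}e^{2\lambda\varphi}u_t^2+C_\delta\int_{\Omega_0}e^{2\lambda\varphi}|Lu|^2$, where $|Lu|^2\lesssim\lambda^4u^2+\lambda^2|\nabla u|^2$ on $\Omega$. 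The first term is absorbed into the $u_t^2$ part of the left of \eqref{eq:WeC1-PR25} for $\delta$ small. For the second term I invoke Lemma \ref{lem:pwC2-PR25} with $x_0=\eta$, multiplied through by $\lambda$, so that its left becomes $\int_\Omega e^{\lambda|x-\eta|^2}(\lambda^2|\nabla u|^2+\lambda^4u^2)$ and its boundary term becomes exactly $\int_{\partial\Omega_0}e^{2\lambda\varphi}(\lambda^2|\nabla u|^2+\lambda^4u^2)$; the resulting source $\lambda\int_\Omega e^{\lambda|x-\eta|^2}|\Delta u(0,\cdot)|^2$ is controlled by \eqref{eq:WeC2-PR25}, which bounds it by $C\int_\Omega e^{\lambda|x-\eta|^2}(\lambda^{1+2\alpha}|\nabla u|^2+\lambda^{1+2\alpha'}u^2)$, and since $1+2\alpha<2$ and $1+2\alpha'<4$ this is reabsorbed into the boosted left for $\lambda$ large. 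Thus $\int_{\Omega_0}e^{2\lambda\varphi}(\lambda^2|\nabla u|^2+\lambda^4u^2)$ is dominated by the boundary term, which produces precisely the last integral of \eqref{eq:WeC3-PR25}.

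The main obstacle is the treatment of $\mathcal B$. Its dominant contribution is the cross term $\lambda^2|\nabla\varphi|^2uu_t$, whose natural Young splittings either leave a $\lambda u_t^2$ (one power of $\lambda$ too many for the unweighted $u_t^2$ available on the left) or a $\lambda^4u^2$ (two powers too many for the $\lambda^2u^2$ on the left), so $\mathcal B$ cannot be absorbed by the energy term alone. The resolution is to push the spatial factor $|Lu|^2$ into the $\lambda$-boosted elliptic Carleman estimate at $t=0$, which is the only mechanism converting the over-weighted interior quantities $\lambda^2|\nabla u|^2+\lambda^4u^2$ into boundary data; the fact that this conversion costs one power of $\lambda$ in the $\Delta u$ source is exactly why the exponent restrictions $\alpha\in(0,\tfrac12)$ and $\alpha'\in(0,\tfrac32)$ in \eqref{eq:WeC2-PR25} are imposed.
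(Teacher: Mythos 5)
Your proposal is correct and follows essentially the same route as the paper's own proof: it combines Propositions \ref{prop:weg1-PR25} and \ref{prop:InCa-PR25} with large $\lambda$ to absorb the bad $Q_T$ and $\Omega_T$ terms (handling $q$ via $|\square u|^2\le 2|(\square+q)u|^2+2|q|^2u^2$), and then treats $\mathcal B$ by Young's inequality plus the $\lambda$-boosted elliptic Carleman estimate of Lemma \ref{lem:pwC2-PR25}, with the $\lambda|\Delta u|^2$ source absorbed through \eqref{eq:WeC2-PR25} exactly because $1+2\alpha<2$ and $1+2\alpha'<4$. The only differences from the paper are cosmetic (an explicit $\delta$-splitting and first reducing the elliptic estimate to an interior-by-boundary bound), and your closing remark correctly identifies why $\mathcal B$ cannot be absorbed by the energy term alone.
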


\begin{rem}
	Due to the specific form of the boundary term $\mathcal B$, when either $u|_{t = 0} = 0$ or $u_t|_{t = 0} = 0$, we can immediately conclude from \eqref{eq:WeC1-PR25} that
	\begin{equation*}
		C \int_{\Omega_0} e^{2\lambda \varphi} (|\nabla_{t,x} u|^2 + \lambda^2 u^2)
		\leq \int_{Q_T} e^{2\lambda \varphi} |(\square + q) u|^2 + \int_{\Sigma_T} e^{2\lambda \varphi} (\lambda |\nabla_{t,x} u|^2 + \lambda^3 u^2).
	\end{equation*}
	This is also mentioned in \cite{Klibanov2021book}*{Corollary 2.5.1}.
	Here, our assumption \eqref{eq:WeC2-PR25} is more general.
\end{rem}

\begin{proof}
	For $q \in L^\infty(\Omega)$, using
	\(
	|(\square + q) u|^2 \leq 2|\square u|^2 + 2|q|^2 u^2
	\)
	we can generalize the wave operator $\square$ in \eqref{eq:h0u-PR25} to $(\square + q) u$.
	Similarly, using
	\(
	|\square u|^2 \leq 2|(\square + q) u|^2 + 2|q|^2 u^2
	\)
	we can generalize the wave operator $\square$ in \eqref{eq:InCa-PR25} to $(\square + q) u$.
	
	Combining the two estimates in Propositions \ref{prop:weg1-PR25} \& \ref{prop:InCa-PR25}, we can cancel the integrals on $Q_T$ and $\Omega_T$ to obtain
	\begin{align}
		& \ \int_{\Omega_0} e^{2\lambda \varphi} (|\nabla_{t,x} u|^2 + \lambda^2 u^2) + \beta^{-1} \int_{\Omega_0} e^{2\lambda \varphi} \mathcal B \nonumber \\
		\leq & \ C \Big( \beta^{-1} \int_{Q_T} e^{2\lambda \varphi} |(\square + q) u|^2 + \beta^{-1} \int_{\Sigma_T} e^{2\lambda \varphi} (|\nabla_{t,x} u|^2 + \lambda^2 u^2) \Big). \label{eq:InC1-PR25}
	\end{align}
	Note that $\beta$ is set to be $2(\sup_\Omega |\nabla \varphi| + 1) T^{-1}$, so it can be absorbed into the generic constant $C$.
	We proved \eqref{eq:WeC1-PR25}.
	
	Note that the term $\beta^{-1} \int_{\Omega_0} e^{2\lambda \varphi} \mathcal B$ in \eqref{eq:InC1-PR25} can be estimate as follows,
	\begin{align*}
		-\big| \beta^{-1} \int_{\Omega_0} e^{2\lambda \varphi} \mathcal B \big|
		& \geq -C \int_{\Omega_0} e^{2\lambda \varphi} |u_t| (\lambda |\nabla u| + \lambda^2 |u|) \\
		& \geq -\frac 1 2 \int_{\Omega_0} e^{2\lambda \varphi} u_t^2 - C \lambda \int_{\Omega_0} e^{2\lambda \varphi} (\lambda |\nabla u|^2 + \lambda^3 u^2).
	\end{align*}
	Combining this with \eqref{eq:InC1-PR25}, we obtain
	\begin{align}
		& \ \int_{\Omega_0} e^{2\lambda \varphi} (|\nabla_{t,x} u|^2 + \lambda^2 u^2) - \lambda \int_{\Omega_0} e^{2\lambda \varphi} (\lambda |\nabla u|^2 + \lambda^3 u^2) \nonumber \\
		\leq & \ C \Big( \int_{Q_T} e^{2\lambda \varphi} |(\square + q) u|^2 + \int_{\Sigma_T} e^{2\lambda \varphi} (|\nabla_{t,x} u|^2 + \lambda^2 u^2) \Big). \label{eq:InC2-PR25}
	\end{align}
	
	With slight abuse of notation we denote the restriction $u|_\Omega$ as $u$ too.
	From \eqref{eq:ePu3-PR25} we have
	\[
	\lambda \int_{\Omega_0} e^{2\lambda \varphi} (\lambda |\nabla u|^2 + \lambda^3 u^2)
	\leq C \lambda \int_{\Omega_0} e^{2\lambda \varphi} |\Delta u|^2 + \lambda \int_{\partial \Omega} e^{2\lambda \varphi} (\lambda |\nabla u|^2 + \lambda^3 u^2).
	\]
	Adding this to \eqref{eq:InC2-PR25}, we arrive at
	\begin{align}
		& C \int_{\Omega_0} e^{2\lambda \varphi} (|\partial_t u|^2 + \lambda^2 |\nabla u|^2 + \lambda^4 u^2) \nonumber \\
		\lesssim & \int_{Q_T} e^{2\lambda \varphi} |(\square + q) u|^2 + \int_{\Sigma_T} e^{2\lambda \varphi} (|\nabla_{t,x} u|^2 + \lambda^2 u^2) + \int_{\partial \Omega} e^{2\lambda \varphi} (\lambda^2 |\nabla u|^2 + \lambda^4 u^2) \nonumber \\
		& + \lambda \int_{\Omega_0} e^{2\lambda \varphi} |\Delta u|^2. \label{eq:InC3-PR25}
	\end{align}
	Compared to \eqref{eq:InC1-PR25}, estimate \eqref{eq:InC3-PR25} gains two orders of $\lambda$ for the left-hand-side terms $|\nabla u|^2$ and $u^2$, but introduces an additional term $\lambda |\Delta u|^2$ on the right-hand side.
	Thanks to the condition \eqref{eq:WeC2-PR25}, the term $\lambda \int_{\Omega_0} e^{\lambda \varphi} |\Delta u|^2$ can be absorbed by the term $\int_{\Omega_0} e^{\lambda \varphi} (\lambda^2 |\nabla u|^2 + \lambda^4 u^2)$ on the left-hand-side when $\lambda$ is large, which finally gives \eqref{eq:WeC3-PR25}.
	The proof is done.
\end{proof}

\section{proofs of the inverse stability} \label{sec:IBVP-PR25}

Let $u_j~(j=1,2)$ satisfy
\begin{equation} \label{eq:0uj-PR25}
	\left\{\begin{aligned}
		(\square + q_j) u_j & = 0 && \mbox{in} \ \ Q_T, \\
		u_j & = g && \mbox{on} \ \ \Sigma_T, \\
		u_j = a_j, \ \partial_t u_j & = b_j && \mbox{on} \ \ \{t = 0\} \times \Omega.
	\end{aligned}\right.
\end{equation}
We first give estimates for the differences of the initial conditions.

\begin{lem} \label{lem:abes-PR25}
	Let $u_j \in C^2(\overline Q_T)~(j=1,2)$ satisfy \eqref{eq:0uj-PR25}, then there exist positive constants $\lambda_0$ and $C = C(T,\lambda_0)$ such that for all $\lambda \geq \lambda_0$, there holds
	\begin{align}
		& C \int_{\Omega_0} e^{2\lambda \varphi} [|\nabla (a_1 - a_2)|^2 + (b_1 - b_2)^2 + \lambda^2 (a_1 - a_2)^2] \nonumber \\
		\leq & \, \lambda^{-1/2} \int_{\Omega_0} e^{2\lambda \varphi} (q_1 - q_2)^2 + \int_{\Sigma_T} e^{2\lambda \varphi} [\lambda |\nabla_{t,x} (u_1 - u_2)|^2 + \lambda^3 (u_1 - u_2)^2] \nonumber \\
		& + \int_{\partial \Omega_0} e^{2\lambda \varphi} [\lambda^2 |\nabla (u_1 - u_2)|^2 + \lambda^4 (u_1 - u_2)^2], \label{eq:abes-PR25}
	\end{align}
	provided that there exist two constants $\alpha \in (0,\frac 1 2)$ and $\alpha' \in (0,\frac 3 2)$ such that
	\begin{equation} \label{eq:abs-PR25}
		|\Delta (a_1 - a_2)| \leq C(\lambda^\alpha |\nabla (a_1 - a_2)| + \lambda^{\alpha'} |a_1 - a_2|) \quad \text{in} \quad \Omega.
	\end{equation}
\end{lem}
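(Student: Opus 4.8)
The plan is to reduce the whole statement to the combined estimate \eqref{eq:WeC3-PR25} of Proposition \ref{prop:enCr-PR25}, applied to the difference of the two solutions. First I would set $u := u_1 - u_2$, $q := q_1 - q_2$, $a := a_1 - a_2$ and $b := b_1 - b_2$. Subtracting the two copies of \eqref{eq:0uj-PR25} and regrouping the zeroth-order terms via $q_1 u_1 - q_2 u_2 = q_2 u + q\, u_1$, one finds that $u$ solves $(\square + q_2) u = -q\, u_1$ in $Q_T$, together with $u|_{\Sigma_T} = 0$ and the Cauchy data $u(0,\cdot) = a$, $\partial_t u(0,\cdot) = b$ on $\Omega_0$. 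The decisive structural feature is that the source $-q\, u_1$ factors into the purely spatial unknown $q$ and the space–time field $u_1$, which is bounded on $\overline{Q_T}$ thanks to the hypothesis $u_j \in C^2(\overline{Q_T})$.

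Next I would observe that the auxiliary hypothesis \eqref{eq:abs-PR25} of the lemma is \emph{precisely} condition \eqref{eq:WeC2-PR25} for $u$ at $t=0$, since $u(0,\cdot)=a$. Hence \eqref{eq:WeC3-PR25} applies to $u$ with the $L^\infty$ potential $q_2$, yielding
\[
C \int_{\Omega_0} e^{2\lambda \varphi} (|\nabla_{t,x} u|^2 + \lambda^2 u^2)
\leq \int_{Q_T} e^{2\lambda \varphi} |(\square + q_2) u|^2 + \int_{\Sigma_T} e^{2\lambda \varphi} (\lambda |\nabla_{t,x} u|^2 + \lambda^3 u^2) + \int_{\partial \Omega_0} e^{2\lambda \varphi} (\lambda^2 |\nabla u|^2 + \lambda^4 u^2).
\]
On the left I would evaluate at $t=0$: since $\nabla_{t,x} u|_{t=0} = (b, \nabla a)$ and $u|_{t=0} = a$, the integrand becomes $|\nabla a|^2 + b^2 + \lambda^2 a^2$, which is exactly the left-hand side of \eqref{eq:abes-PR25}. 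The two boundary integrals already match the corresponding terms on the right of \eqref{eq:abes-PR25}, so only the solid-integral source term remains to be processed.

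The crux is converting the source $\int_{Q_T} e^{2\lambda \varphi}\, q^2 u_1^2$ into the initial-slice quantity $\lambda^{-1/2}\int_{\Omega_0} e^{2\lambda \varphi} q^2$, and this is the step I expect to carry the real content. Bounding $u_1^2 \le \|u_1\|_{L^\infty(\overline{Q_T})}^2$ and exploiting the product structure of the weight, $e^{2\lambda \varphi(t,x)} = e^{\lambda|x-\eta|^2} e^{-\lambda \beta t^2}$, I would separate the time integral and use the Gaussian decay $\int_0^T e^{-\lambda \beta t^2}\,\df t \le \tfrac12\sqrt{\pi/\beta}\,\lambda^{-1/2}$, with $\beta$ fixed by \eqref{eq:bTl-PR25}. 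Since $e^{2\lambda\varphi}|_{t=0} = e^{\lambda|x-\eta|^2}$, this produces exactly $C\lambda^{-1/2}\int_{\Omega_0} e^{2\lambda \varphi} q^2$. Absorbing generic constants then gives \eqref{eq:abes-PR25}. The main obstacle is thus not the algebra but recognizing that the Gaussian-in-$t$ profile of the Carleman weight is what trades the full space–time source for an $\Omega_0$-integral \emph{with a negative power of} $\lambda$; this $\lambda^{-1/2}$ gain is precisely what will later allow the $q^2$ term to be absorbed when this lemma is combined with the remaining estimates.
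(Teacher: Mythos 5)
Your proof is correct and follows essentially the same route as the paper: subtract the two systems, verify that hypothesis \eqref{eq:abs-PR25} is exactly condition \eqref{eq:WeC2-PR25} for the difference at $t=0$, apply \eqref{eq:WeC3-PR25}, and convert the space--time source into the $\lambda^{-1/2}$-weighted slice integral via the factorization $e^{2\lambda\varphi}=e^{2\lambda\varphi(0,\cdot)}e^{-\lambda\beta t^2}$ and the Gaussian time integral, just as in the paper's estimate \eqref{eq:qT0-PR25}. The only (immaterial) difference is the mirror-image grouping $(\square+q_2)u=-(q_1-q_2)u_1$ instead of the paper's $(\square+q_1)u=(q_2-q_1)u_2$, which changes nothing since both $q_j$ are bounded and both $u_j$ are continuous on $\overline{Q_T}$.
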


\begin{proof}
	Let $u := u_1 - u_2$, then $u$ satisfies
	\begin{equation} \label{sec:0u-PR25}
		\left\{\begin{aligned}
			(\square + q_1) u & = (q_2 - q_1) u_2 && \mbox{in} \ \ Q_T, \\
			u & = 0 && \mbox{on} \ \ \Sigma_T, \\
			u = a_1 - a_2, \ \partial_t u & = b_1 - b_2 && \mbox{on} \ \ \{t = 0\} \times \Omega.
		\end{aligned}\right.
	\end{equation}
	Utilizing the condition \eqref{eq:abs-PR25}, we can apply Proposition \ref{prop:enCr-PR25} to \eqref{sec:0u-PR25} to have
	\begin{align}
		& C \int_{\Omega_0} e^{2\lambda \varphi} [|\nabla (a_1 - a_2)|^2 + (b_1 - b_2)^2 + \lambda^2 (a_1 - a_2)^2] \nonumber \\
		\leq & \int_{Q_T} e^{2\lambda \varphi} |(q_2 - q_1) u_2|^2 + \int_{\Sigma_T} e^{2\lambda \varphi} (\lambda |\nabla_{t,x} u|^2 + \lambda^3 u^2) \nonumber \\
		& + \int_{\partial \Omega_0} e^{2\lambda \varphi} [\lambda |\nabla_{t,x} (u_1 - u_2)|^2 + \lambda^3 (u_1 - u_2)^2]. \label{eq:ab13-PR25}
	\end{align}
	The condition $u_2 \in C(\overline Q_T)$ suggests $\sup_{\overline Q_T} |u_2|$ is bounded, so
	\begin{align}
		\int_{Q_T} e^{2\lambda \varphi} |(q_2 - q_1) u_2|^2
		& = \int_{\Omega} \Big( \int_0^T e^{2\lambda (\varphi(t) - \varphi(0))} |u_2|^2 \dif t \Big) e^{2\lambda \varphi(0)} (q_1 - q_2)^2 \dif x \nonumber \\
		& \leq C \int_{\Omega} \Big( \int_0^T e^{-\lambda \beta t^2} \dif t \Big) e^{2\lambda \varphi(0)} (q_1 - q_2)^2 \dif x \nonumber \\
		& \leq \frac C {\sqrt{\beta \lambda}} \int_{\Omega_0} e^{2\lambda \varphi} (q_1 - q_2)^2 \dif x. \label{eq:qT0-PR25}
	\end{align}
	We arrive at \eqref{eq:abes-PR25}.
\end{proof}

We are ready to prove the main results.
We first prove Theorem \ref{thm:2-PR25}, which is a straightforward consequence of Lemma \ref{lem:abes-PR25}.

\begin{proof}[Proof of Theorem \ref{thm:2-PR25}]
	Let $u_j~(j=1,2)$ satisfy \eqref{eq:2-PR25} with $q_j$.
	Because $s \geq \lceil n/2 \rceil$, by standard regularity result and Sobolev embedding we see $u_j \in C(\overline Q_T)$.
	Then $u_j$ satisfy \eqref{eq:0uj-PR25} with $a_j = 0$, so the requirement \eqref{eq:abs-PR25} in Lemma \ref{lem:abes-PR25} is satisfied, thus by \eqref{eq:abes-PR25},
	\begin{align*}
		C \int_{\Omega_0} e^{2\lambda \varphi} (q_1 - q_2)^2
		& \leq \lambda^{-1/2} \int_{\Omega_0} e^{2\lambda \varphi} (q_1 - q_2)^2 + \int_{\Sigma_T} e^{2\lambda \varphi} (\lambda |\nabla_{t,x} u|^2 + \lambda^3 u^2) \\
		& \quad + \int_{\partial \Omega_0} e^{2\lambda \varphi} (\lambda^2 |\nabla u|^2 + \lambda^4 u^2),
	\end{align*}
	where $u = u_1 - u_2$.
	This implies the result.
\end{proof}

Let us move on to prove Theorem \ref{thm:1-PR25}.

\begin{proof}[Proof of Theorem \ref{thm:1-PR25}]
	Let $u_j~(j=1,2)$ satisfy \eqref{eq:1-PR25} with $q_j$ and denote $v := \partial_t (u_1 - u_2)$, then $v$ satisfies
	\begin{equation} \label{sec:0v-PR25}
		\left\{\begin{aligned}
			(\square + q_1) v & = (q_2 - q_1) \partial_t u_2 && \mbox{in} \ \ Q_T, \\
			v & = 0 && \mbox{on} \ \ \Sigma_T, \\
			v = b_1 - b_2, \ \partial_t v & = (q_2 - q_1) a_2 + (\Delta - q_1) (a_1 - a_2) && \mbox{on} \ \ \{t = 0\} \times \Omega.
		\end{aligned}\right.
	\end{equation}
	with the condition \eqref{eq:ass1-PR25}, applying \eqref{eq:WeC3-PR25} to \eqref{sec:0v-PR25}, we can have the following estimate,
	\begin{align*}
		& C \int_{\Omega_0} e^{2\lambda \varphi} [|\nabla (b_1 - b_2)|^2 + [(q_2 - q_1) a_2 + (\Delta - q_1) (a_1 - a_2)]^2 + \lambda^2 (b_1 - b_2)^2] \\
		\leq & \int_{Q_T} e^{2\lambda \varphi} |(q_1 - q_2) \partial_t u_2|^2 + \int_{\Sigma_T} e^{2\lambda \varphi} (\lambda |\nabla_{t,x} v|^2 + \lambda^3 v^2) + \int_{\partial \Omega_0} e^{2\lambda \varphi} (\lambda^2 |\nabla v|^2 + \lambda^4 v^2).
	\end{align*}
	Using the basic algebra $(x+y)^2 \geq \frac 1 2 x^2 - y^2$ and the conditions \eqref{eq:ass1-PR25} and \eqref{eq:ass2-PR25}, we know
	\begin{align*}
		& \, [(q_2 - q_1) a_2 + (\Delta - q_1) (a_1 - a_2)]^2 \\
		\geq & \, \frac {C_2} 2 (q_1 - q_2)^2 - (4\widetilde C + \norm{q_1}_{L^\infty(\Omega)}) [(a_1 - a_2)^2 + |\nabla (a_1 - a_2)|^2],
	\end{align*}
	thus the estimate becomes
	\begin{align*}
		& C \int_{\Omega_0} e^{2\lambda \varphi} [|\nabla (b_1 - b_2)|^2 + (q_1 - q_2)^2 + \lambda^2 (b_1 - b_2)^2] \nonumber \\
		\leq & \int_{Q_T} e^{2\lambda \varphi} |(q_1 - q_2) \partial_t u_2|^2 + \int_{\Sigma_T} e^{2\lambda \varphi} (\lambda |\nabla_{t,x} v|^2 + \lambda^3 v^2) + \int_{\partial \Omega_0} e^{2\lambda \varphi} (\lambda^2 |\nabla v|^2 + \lambda^4 v^2) \nonumber \\
		& + \int_{\Omega_0} e^{2\lambda \varphi} (a_1 - a_2)^2,
	\end{align*}
	for some $C$ depending on $\lambda_0$, $T$, $C_2$, $\widetilde C$, $\varphi$ and $\norm{q_1}_{L^\infty(\Omega)}$.
	
	Moreover, because $s \geq \lceil n/2 \rceil + 1$, by standard regularity result and Sobolev embedding we see $\partial_t u_j \in C^1(\overline Q_T)$, thus similar to \eqref{eq:qT0-PR25} we have
	\[
	\int_{Q_T} e^{2\lambda \varphi} |(q_1 - q_2) \partial_t u_2|^2 \dif t \dif x
	\leq C \lambda^{-1/2} \int_{\Omega_0} e^{2\lambda \varphi} (q_1 - q_2)^2 \dif x.
	\]
	Therefore, the estimate further becomes
	\begin{align}
		& \, C \int_{\Omega_0} e^{2\lambda \varphi} [|\nabla (b_1 - b_2)|^2 + (q_1 - q_2)^2 + \lambda^2 (b_1 - b_2)^2] \nonumber \\
		\leq & \, \lambda^{-1/2} \int_{Q_T} e^{2\lambda \varphi} (q_1 - q_2)^2 + \int_{\Sigma_T} e^{2\lambda \varphi} (\lambda |\nabla_{t,x} v|^2 + \lambda^3 v^2) + \int_{\partial \Omega_0} e^{2\lambda \varphi} (\lambda^2 |\nabla v|^2 + \lambda^4 v^2) \nonumber \\
		& \, + \int_{\Omega_0} e^{2\lambda \varphi} (a_1 - a_2)^2. \label{eq:ab14-PR25}
	\end{align}
	
	To deal with the last term in \eqref{eq:ab14-PR25}, by Lemma \ref{lem:abes-PR25} we have
	\begin{align*}
		& C \int_{\Omega_0} e^{2\lambda \varphi} [|\nabla (a_1 - a_2)|^2 + \lambda^2 (a_1 - a_2)^2] \nonumber \\
		\leq & \, \lambda^{-1/2} \int_{\Omega_0} e^{2\lambda \varphi} (q_1 - q_2)^2 + \int_{\Sigma_T} e^{2\lambda \varphi} (\lambda |\nabla_{t,x} u|^2 + \lambda^3 u^2) + \int_{\partial \Omega_0} e^{2\lambda \varphi} (\lambda^2 |\nabla u|^2 + \lambda^4 u^2),
	\end{align*}
	where $u = u_1 - u_2$.
	Summing this with \eqref{eq:ab14-PR25} and then absorbing $\lambda^{-1/2} \int_{\Omega_0} e^{2\lambda \varphi} (q_1 - q_2)^2$ to the left, we arrive at
	\begin{align*}
		& C \int_{\Omega_0} e^{2\lambda \varphi} [(q_2 - q_1)^2 + |\nabla (a_1 - a_2)|^2 + \lambda^2 (a_1 - a_2)^2 + |\nabla (b_1 - b_2)|^2 + \lambda^2 (b_1 - b_2)^2] \nonumber \\
		\leq & \int_{\Sigma_T} e^{2\lambda \varphi} (\lambda |\nabla_{t,x} v|^2 + \lambda^3 v^2) + \int_{\partial \Omega_0} e^{2\lambda \varphi} (\lambda^2 |\nabla v|^2 + \lambda^4 v^2) \\
		& + \int_{\Sigma_T} e^{2\lambda \varphi} (\lambda |\nabla_{t,x}  u|^2 + \lambda^3 u^2) + \int_{\partial \Omega_0} e^{2\lambda \varphi} (\lambda^2 |\nabla u|^2 + \lambda^4 u^2).
	\end{align*}
	which implies
	\begin{align*}
		& \, \norm{q_1 - q_2}_{L^2(\Omega)} + \norm{a_1 - a_2}_{H^1(\Omega)} + \norm{b_1 - b_2}_{H^1(\Omega)} \nonumber \\
		\leq & \, C \big( \norm{u_1 - u_2}_{H^2(\Sigma_T)} + \norm{\partial_\nu (u_1 - u_2)}_{H^1(\Sigma_T)} \nonumber \\
		& \, + \norm{\partial_t (u_1 - u_2)}_{H^1(\partial \Omega)} + \norm{\partial_\nu \partial_t (u_1 - u_2)}_{L^2(\partial \Omega)} \nonumber \\
		& \, + \norm{u_1 - u_2}_{H^1(\partial \Omega)} + \norm{\partial_\nu (u_1 - u_2)}_{L^2(\partial \Omega)} \big).
	\end{align*}
	Noting that $\partial \Omega$ is part of the boundary of $\Sigma_T$, so by Sobolev trace theorem, the above estimate can be simplified to
	\begin{align*}
		& \, \norm{q_1 - q_2}_{L^2(\Omega)} + \norm{a_1 - a_2}_{H^1(\Omega)} + \norm{b_1 - b_2}_{H^1(\Omega)} \nonumber \\
		\leq & \, C \big( \norm{u_1 - u_2}_{H^3(\Sigma_T)} + \norm{\partial_\nu (u_1 - u_2)}_{H^2(\Sigma_T)} \big).
	\end{align*}
	This is the desired result.
\end{proof}

\section*{Acknowledgements}

The research of the author is partially supported by the NSF of China under the grant No.~12301540.
The author would like to thank Professor Mikhail Klibanov for reading the draft version of this paper and for giving helpful comments.


{

\begin{bibdiv}
	\begin{biblist}
		
		\bib{Bellassoued2009Schrodinger}{article}{
			author={Bellassoued, Mourad},
			author={Choulli, Mourad},
			title={Logarithmic stability in the dynamical inverse problem for the
				{S}chr\"odinger equation by arbitrary boundary observation},
			date={2009},
			ISSN={0021-7824},
			journal={J. Math. Pures Appl. (9)},
			volume={91},
			number={3},
			pages={233\ndash 255},
			url={https://doi.org/10.1016/j.matpur.2008.06.002},
			review={\MR{2504159}},
		}
		
		\bib{Bellassoued2004log}{article}{
			author={Bellassoued, Mourad},
			title={Global logarithmic stability in inverse hyperbolic problem by
				arbitrary boundary observation},
			date={2004},
			ISSN={0266-5611,1361-6420},
			journal={Inverse Problems},
			volume={20},
			number={4},
			pages={1033\ndash 1052},
			url={https://doi.org/10.1088/0266-5611/20/4/003},
			review={\MR{2087978}},
		}
		
		\bib{Be04}{article}{
			author={Bellassoued, Mourad},
			title={Uniqueness and stability in determining the speed of propagation
				of second-order hyperbolic equation with variable coefficients},
			date={2004},
			ISSN={0003-6811,1563-504X},
			journal={Appl. Anal.},
			volume={83},
			number={10},
			pages={983\ndash 1014},
			url={https://doi.org/10.1080/0003681042000221678},
			review={\MR{2096661}},
		}
		
		\bib{BukhgeimKlibanov}{article}{
			author={Bukhgeim, Aleksandr~L'vovich},
			author={Klibanov, Mikhail~Viktorovich},
			title={Global uniqueness of a class of multidimensional inverse
				problems},
			date={1981},
			journal={Sov. Math. Dokl.},
			volume={24},
			pages={244\ndash 247},
		}
		
		\bib{Baudouin2002Schrodinger}{article}{
			author={Baudouin, Lucie},
			author={Puel, Jean-Pierre},
			title={Uniqueness and stability in an inverse problem for the
				{S}chr\"odinger equation},
			date={2002},
			ISSN={0266-5611,1361-6420},
			journal={Inverse Problems},
			volume={18},
			number={6},
			pages={1537\ndash 1554},
			url={https://doi.org/10.1088/0266-5611/18/6/307},
			review={\MR{1955903}},
		}
		
		\bib{Bellassoued2006acoustic}{article}{
			author={Bellassoued, Mourad},
			author={Yamamoto, Masahiro},
			title={Logarithmic stability in determination of a coefficient in an
				acoustic equation by arbitrary boundary observation},
			date={2006},
			ISSN={0021-7824},
			journal={J. Math. Pures Appl. (9)},
			volume={85},
			number={2},
			pages={193\ndash 224},
			url={https://doi.org/10.1016/j.matpur.2005.02.004},
			review={\MR{2199012}},
		}
		
		\bib{Bellassoued2017book}{book}{
			author={Bellassoued, Mourad},
			author={Yamamoto, Masahiro},
			title={Carleman estimates and applications to inverse problems for
				hyperbolic systems},
			series={Springer Monographs in Mathematics},
			publisher={Springer, Tokyo},
			date={2017},
			ISBN={978-4-431-56598-7; 978-4-431-56600-7},
			url={https://doi.org/10.1007/978-4-431-56600-7},
			review={\MR{3729280}},
		}
		
		\bib{Cipolatti2011finite}{article}{
			author={Cipolatti, Rolci},
			author={Yamamoto, Masahiro},
			title={An inverse problem for a wave equation with arbitrary initial
				values and a finite time of observations},
			date={2011},
			ISSN={0266-5611,1361-6420},
			journal={Inverse Problems},
			volume={27},
			number={9},
			pages={095006, 15},
			url={https://doi.org/10.1088/0266-5611/27/9/095006},
			review={\MR{2824765}},
		}
		
		\bib{EvanPDEs}{book}{
			author={Evans, Lawrence~C.},
			title={Partial differential equations},
			edition={Second},
			series={Graduate Studies in Mathematics},
			publisher={American Mathematical Society, Providence, RI},
			date={2010},
			volume={19},
			ISBN={978-0-8218-4974-3},
			url={https://doi.org/10.1090/gsm/019},
			review={\MR{2597943}},
		}
		
		\bib{LvQi2019Unified}{book}{
			author={Fu, Xiaoyu},
			author={L\"u, Qi},
			author={Zhang, Xu},
			title={Carleman estimates for second order partial differential
				operators and applications},
			series={SpringerBriefs in Mathematics},
			publisher={Springer, Cham},
			date={2019},
			ISBN={978-3-030-29529-5; 978-3-030-29530-1},
			url={https://doi.org/10.1007/978-3-030-29530-1},
			note={A unified approach, BCAM SpringerBriefs},
			review={\MR{3971248}},
		}
		
		\bib{Huang2020Stability}{article}{
			author={Huang, X.},
			author={Imanuvilov, O.~Y.},
			author={Yamamoto, M.},
			title={Stability for inverse source problems by {C}arleman estimates},
			date={2020},
			ISSN={0266-5611,1361-6420},
			journal={Inverse Problems},
			volume={36},
			number={12},
			pages={125006, 20},
			url={https://doi.org/10.1088/1361-6420/aba892},
			review={\MR{4186176}},
		}
		
		\bib{Imanuvilov2001}{incollection}{
			author={Imanuvilov, Oleg~Y.},
			author={Yamamoto, Masahiro},
			title={Global {L}ipschitz stability in an inverse hyperbolic problem by
				interior observations},
			date={2001},
			volume={17},
			pages={717\ndash 728},
			url={https://doi.org/10.1088/0266-5611/17/4/310},
			note={Special issue to celebrate Pierre Sabatier's 65th birthday
				(Montpellier, 2000)},
			review={\MR{1861478}},
		}
		
		\bib{IY01}{article}{
			author={Imanuvilov, Oleg~Y.},
			author={Yamamoto, Masahiro},
			title={Global uniqueness and stability in determining coefficients of
				wave equations},
			date={2001},
			ISSN={0360-5302},
			journal={Comm. Partial Differential Equations},
			volume={26},
			number={7-8},
			pages={1409\ndash 1425},
			url={https://doi.org/10.1081/PDE-100106139},
			review={\MR{1855284}},
		}
		
		\bib{IY2003Determination}{article}{
			author={Imanuvilov, Oleg~Y.},
			author={Yamamoto, Masahiro},
			title={Determination of a coefficient in an acoustic equation with a
				single measurement},
			date={2003},
			ISSN={0266-5611,1361-6420},
			journal={Inverse Problems},
			volume={19},
			number={1},
			pages={157\ndash 171},
			url={https://doi.org/10.1088/0266-5611/19/1/309},
			review={\MR{1964256}},
		}
		
		\bib{Isakov2003Stability}{article}{
			author={Isakov, V.},
			author={Yamamoto, M.},
			title={Stability in a wave source problem by {D}irichlet data on
				subboundary},
			date={2003},
			ISSN={0928-0219,1569-3945},
			journal={J. Inverse Ill-Posed Probl.},
			volume={11},
			number={4},
			pages={399\ndash 409},
			url={https://doi.org/10.1163/156939403770862802},
			review={\MR{2018282}},
		}
		
		\bib{IY1998parabolic}{article}{
			author={Imanuvilov, Oleg~Y.},
			author={Yamamoto, Masahiro},
			title={Lipschitz stability in inverse parabolic problems by the
				{C}arleman estimate},
			date={1998},
			ISSN={0266-5611,1361-6420},
			journal={Inverse Problems},
			volume={14},
			number={5},
			pages={1229\ndash 1245},
			url={https://doi.org/10.1088/0266-5611/14/5/009},
			review={\MR{1654631}},
		}
		
		\bib{Klibanov2021book}{book}{
			author={Klibanov, Michael~V.},
			author={Li, Jingzhi},
			title={Inverse problems and {C}arleman estimates---global uniqueness,
				global convergence and experimental data},
			series={Inverse and Ill-posed Problems Series},
			publisher={De Gruyter, Berlin},
			date={2021},
			volume={63},
			ISBN={978-3-11-074541-2; 978-3-11-074548-1; 978-3-11-074555-9},
			url={https://doi.org/10.1515/9783110745481},
			review={\MR{4368863}},
		}
		
		\bib{KianLiu2025}{article}{
			author={Kian, Yavar},
			author={Liu, Hongyu},
			title={Uniqueness and stability in determining the wave equation from a
				single passive boundary measurement},
			date={2025},
			journal={arXiv preprint arXiv:2507.10012},
		}
		
		\bib{Klibanov1992}{article}{
			author={Klibanov, Michael~V.},
			title={Inverse problems and {C}arleman estimates},
			date={1992},
			ISSN={0266-5611,1361-6420},
			journal={Inverse Problems},
			volume={8},
			number={4},
			pages={575\ndash 596},
			url={https://doi.org/10.1088/0266-5611/8/4/009},
			review={\MR{1178231}},
		}
		
		\bib{Klibanov2022point}{article}{
			author={Klibanov, Michael~V.},
			author={Romanov, V.~G.},
			title={A {H\"older} stability estimates in the coefficient inverse
				problem for the wave equation with a point source},
			date={2022},
			journal={Eurasian J. Math. Comput. Appl.},
			volume={10},
			number={2},
			pages={11\ndash 25},
			url={https://doi.org/10.32523/2306-6172-2022-10-2-11-25},
		}
		
		\bib{Klibanov2023plane}{article}{
			author={Klibanov, Michael~V.},
			author={Romanov, Vladimir~G.},
			title={A {H}\"older stability estimate for a 3{D} coefficient inverse
				problem for a hyperbolic equation with a plane wave},
			date={2023},
			ISSN={0928-0219,1569-3945},
			journal={J. Inverse Ill-Posed Probl.},
			volume={31},
			number={2},
			pages={223\ndash 242},
			url={https://doi.org/10.1515/jiip-2022-0071},
			review={\MR{4565975}},
		}
		
		\bib{Klibanov2004book}{book}{
			author={Klibanov, M.~V.},
			author={Timonov, A.},
			title={Carleman estimates for coefficient inverse problems and numerical
				applications},
			series={Inverse and Ill-posed Problems Series},
			publisher={VSP, Utrecht},
			date={2004},
			ISBN={90-6764-405-6},
			url={https://doi.org/10.1515/9783110915549},
			review={\MR{2126149}},
		}
		
		\bib{Kian2025pat}{article}{
			author={Kian, Yavar},
			author={Uhlmann, Gunther},
			title={Determination of the sound speed and an initial source in
				photoacoustic tomography},
			date={2025},
			ISSN={0002-9947,1088-6850},
			journal={Trans. Amer. Math. Soc.},
			volume={378},
			number={8},
			pages={5329\ndash 5353},
			url={https://doi.org/10.1090/tran/9467},
			review={\MR{4929847}},
		}
		
		\bib{Kubo2000Unique}{article}{
			author={Kubo, Masayoshi},
			title={Uniqueness in inverse hyperbolic problems. {Carleman} estimate
				for boundary value problems},
			date={2000},
			ISSN={0023-608X},
			journal={J. Math. Kyoto Univ.},
			volume={40},
			number={3},
			pages={451\ndash 473},
		}
		
		\bib{Klibanov2006Lipschitz}{article}{
			author={Klibanov, Michael~V.},
			author={Yamamoto, Masahiro},
			title={Lipschitz stability of an inverse problem for an acoustic
				equation},
			date={2006},
			ISSN={0003-6811,1563-504X},
			journal={Appl. Anal.},
			volume={85},
			number={5},
			pages={515\ndash 538},
			url={https://doi.org/10.1080/00036810500474788},
			review={\MR{2213073}},
		}
		
		\bib{llt1986non}{article}{
			author={Lasiecka, I.},
			author={Lions, J.-L.},
			author={Triggiani, R.},
			title={Nonhomogeneous boundary value problems for second order
				hyperbolic operators},
			date={1986},
			ISSN={0021-7824},
			journal={J. Math. Pures Appl. (9)},
			volume={65},
			number={2},
			pages={149\ndash 192},
			review={\MR{867669}},
		}
		
		\bib{Shishatskij1986}{book}{
			author={Lavrent'ev, M.~M.},
			author={Romanov, V.~G.},
			author={Shishatski\u{i}, S.~P.},
			editor={Leifman, Lev~J.},
			title={Ill-posed problems of mathematical physics and analysis},
			series={Translations of Mathematical Monographs},
			publisher={American Mathematical Society, Providence, RI},
			date={1986},
			volume={64},
			ISBN={0-8218-4517-9},
			url={https://doi.org/10.1090/mmono/064},
			note={Translated from the Russian by J. R. Schulenberger},
			review={\MR{847715}},
		}
		
		\bib{Liu2015pat}{article}{
			author={Liu, Hongyu},
			author={Uhlmann, Gunther},
			title={Determining both sound speed and internal source in thermo- and
				photo-acoustic tomography},
			date={2015},
			ISSN={0266-5611,1361-6420},
			journal={Inverse Problems},
			volume={31},
			number={10},
			pages={105005, 10},
			url={https://doi.org/10.1088/0266-5611/31/10/105005},
			review={\MR{3405365}},
		}
		
		\bib{oksanen2024rigidityB}{article}{
			author={Oksanen, Lauri},
			author={Rakesh},
			author={Salo, Mikko},
			title={Rigidity for fixed angle inverse scattering for {Riemannian}
				metrics},
			date={2024},
			journal={arXiv preprint arXiv:2410.06864},
		}
		
		\bib{oksanen2024rigidityA}{article}{
			author={Oksanen, Lauri},
			author={Rakesh},
			author={Salo, Mikko},
			title={Rigidity in the {Lorentzian} {Calder\'on} problem with formally
				determined data},
			date={2024},
			journal={arXiv preprint arXiv:2409.18604},
		}
		
		\bib{RakeshSalo2}{article}{
			author={Rakesh},
			author={Salo, Mikko},
			title={Fixed {A}ngle {I}nverse {S}cattering for {A}lmost {S}ymmetric or
				{C}ontrolled {P}erturbations},
			date={2020},
			ISSN={0036-1410},
			journal={SIAM J. Math. Anal.},
			volume={52},
			number={6},
			pages={5467\ndash 5499},
			url={https://doi.org/10.1137/20M1319309},
			review={\MR{4170189}},
		}
		
		\bib{RakeshSalo1}{article}{
			author={Rakesh},
			author={Salo, Mikko},
			title={The fixed angle scattering problem and wave equation inverse
				problems with two measurements},
			date={2020},
			ISSN={0266-5611},
			journal={Inverse Problems},
			volume={36},
			number={3},
			pages={035005, 42},
			url={https://doi.org/10.1088/1361-6420/ab23a2},
			review={\MR{4068234}},
		}
		
		\bib{Yamamoto2025book}{book}{
			author={Yamamoto, Masahiro},
			title={Introduction to {I}nverse {P}roblems for {E}volution
				{E}quations},
			series={Lecture Notes of the Unione Matematica Italiana},
			publisher={Springer, Cham},
			date={2025},
			volume={29},
			ISBN={978-3-031-86266-3; 978-3-031-86267-0},
			url={https://doi.org/10.1007/978-3-031-86267-0},
			note={Stability and Uniqueness by Carleman Estimates},
			review={\MR{4970949}},
		}
		
		\bib{Masahiro1999}{article}{
			author={Yamamoto, Masahiro},
			title={Uniqueness and stability in multidimensional hyperbolic inverse
				problems},
			date={1999},
			ISSN={0021-7824},
			journal={J. Math. Pures Appl. (9)},
			volume={78},
			number={1},
			pages={65\ndash 98},
			url={https://doi.org/10.1016/S0021-7824(99)80010-5},
			review={\MR{1671221}},
		}
		
	\end{biblist}
\end{bibdiv}

}

\end{document}